\documentclass[11pt]{article}

\usepackage[T1]{fontenc}
\usepackage{textcomp}
\usepackage{newtxtext}

\makeatletter
\@ifundefined{cprime}{}{}
\@ifundefined{cdprime}{}{}
\@ifundefined{cyrdash}{}{}
\makeatother

\usepackage{geometry}
\usepackage{graphicx, color}
\usepackage[dvipsnames]{xcolor}
\usepackage{amsmath, amsfonts}
\usepackage{mathrsfs}
\usepackage{mathtools}
\usepackage{url}

\newtheorem{thm}{Theorem}[subsection]
\newtheorem{lmm}[thm]{Lemma}
\newtheorem{prp}[thm]{Proposition}

\newtheorem{rmk}[thm]{Remark}

\newtheorem{fct}[thm]{Fact}

\newenvironment{proof}[1][Proof]{%
	\par\noindent\textit{#1. }\ignorespaces
}{%
	\hfill$\square$\par
}

\newenvironment{namedthm}[1]
{\par\medskip\noindent\textbf{#1.}\itshape}
{\par\medskip}

\usepackage{hyperref}
\hypersetup{
	bookmarks=true,
	bookmarksdepth=tocdepth,
	bookmarksnumbered=true,
	colorlinks=true,
	linkcolor=blue,
	citecolor=teal,
	urlcolor=PineGreen,
	pdftitle={},
	pdfsubject={},
	pdfauthor={},
	pdfkeywords={},
}

\title{Explicit Formulas for the One-Parameter Group Generated by the Dunkl Operator on $\mathbb{R}$}
\author{Temma Aoyama}
\date{}

\begin{document}
	
	\maketitle
	\begin{abstract}
		Let $T_{b}$ be the Dunkl operator for the reflection group $G=\mathbb{Z}/2\mathbb{Z}$, and $D_{b}:=|x|^{b}\,T_{b}\,|x|^{-b}$. We compute explicitly the unitary one-parameter group $e^{tD_{b}}$ generated by $D_{b}$. We obtain two representations: a boundary value representation from the upper and lower half-planes, and a real-variable formula consisting of a translation term and a principal value integral term with an explicit kernel expressed in terms of Legendre functions.
	\end{abstract}

			\tableofcontents
			
			\section{Statement of the Main Result}
			
			The purpose of this paper is to obtain explicit formulas for the one-parameter unitary group generated by an operator $D_b$ on $L^2(\mathbb R)$; see Subsection~\ref{1.1} for the definition of $D_b$ and Fact~\ref{fct} for the basic properties.
			
			More precisely, we first derive a boundary value representation of $e^{tD_b}$, and then rewrite it as a real-variable formula consisting of a translation term and a principal value integral term with an explicit kernel expressed in terms of Legendre functions.
			The main results are presented in Subsection~\ref{1.2} as Theorems~\hyperref[mtA]{A}, \hyperref[mtB]{B}, and \hyperref[mtC]{C}.
			
			\subsection{Review of Background Material}\label{1.1}
			
			Suppose $b> -\frac{1}{2}$. We define the operator $D_{b}$ on  $|x|^{b}\mathcal{S}(\mathbb{R})\left(\subset L^{2}(\mathbb{R})\right)$ by
			$$D_{b}f(x) :=\frac{d f}{d x}(x)-\frac{b}{x}f(-x).$$
			This coincides with the Dunkl operator \cite{MR951883} for a reflection group $G=\mathbb{Z}/2\mathbb{Z}$, via conjugation by $|x|^{b}$; that is, $T_{b}f(x) := |x|^{-b}D_{b}|x|^{b}f(x) =\frac{d f}{d x}(x)+b\frac{f(x)-f(-x)}{x}$ is the Dunkl operator. To present the main theorem in a simpler form, we work with $D_{b}$ in this paper.  When necessary, we write $D_{b}$ as $D_{b,x}$ to emphasize that it acts on functions of $x$.
			
			\vspace{12pt}
			We define the Dunkl transform \cite{MR1199124} in this context, for $f\in|x|^{b}\mathcal{S}(\mathbb{R})$,
			$$\mathcal{F}_{b}f(\xi) := \frac{1}{2^{b+1/2}} \int_{-\infty}^{\infty}|\xi|^{b}|x|^{b}\left(\widetilde{J}_{b-\frac{1}{2}}\bigl(\xi x\bigr) -i\,\left(\frac{\xi x}{2}\right)\widetilde{J}_{b+\frac{1}{2}}\bigl(\xi x\bigr)\right)f(x)dx ,$$
			
			where $\widetilde{J}_{\nu}\left(w\right) :=\sum_{m=0}^{\infty}\frac{(-1)^{m}}{\Gamma\left(m+\nu+1\right)m!}\left(\frac{w}{2}\right)^{2m}$ is a normalized Bessel function, following the notation of \cite{MR2401813}.
			
			\vspace{12pt}
			We summarize the basic properties of $D_b$ and $\mathcal F_b$ needed in the sequel.
			
			\begin{fct}[Basic properties of $D_{b}$ and $\mathcal{F}_{b}$]\label{fct}
				\leavevmode
				\begin{enumerate}
					\item When $b=0$, $\mathcal{F}_{b}$ coincides with the Fourier transform on $\mathbb{R}$, and $D_{b}$ coincides with $\frac{d}{dx}$.
					\item $\mathcal{F}_{b}\overline{\mathcal{F}_{b}}=\overline{\mathcal{F}_{b}}\mathcal{F}_{b}=1$,\hspace{6pt} and\hspace{6pt} $\mathcal{F}_{b}^{4} =1$.
					\item $\mathcal{F}_{b}D_{b,x}=i\xi\mathcal{F}_{b}$,\hspace{6pt} and \hspace{6pt}$\mathcal{F}_{b}x = iD_{b,\xi}\mathcal{F}_{b}$.
					\item $\mathcal{F}_{b}$ extends uniquely to a unitary operator on $L^{2}(\mathbb{R})$. $D_{b}$ is an essentially skew-adjoint operator on $L^{2}(\mathbb{R})$.
					\item $D_{b}\left(|x|^{b}\mathcal{S}(\mathbb{R})\right)\subset |x|^{b}\mathcal{S}(\mathbb{R})$,\hspace{6pt} and\hspace{6pt} $\mathcal{F}_{b}\left(|x|^{b}\mathcal{S}(\mathbb{R})\right)\subset |x|^{b}\mathcal{S}(\mathbb{R})$.
				\end{enumerate}
			\end{fct}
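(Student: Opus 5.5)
The plan is to conjugate everything back to the classical Dunkl setting by the weight $|x|^{b}$ and read off each item from known facts about $T_b$ and the Dunkl transform, checking only that the weights transform correctly. Write $U f := |x|^{-b} f$, so that $U$ maps $|x|^{b}\mathcal S(\mathbb R)$ onto $\mathcal S(\mathbb R)$ and is a unitary map from $L^2(\mathbb R, dx)$ onto $L^2(\mathbb R, |x|^{2b}dx)$. Then $D_b = U^{-1} T_b U$ by definition, and
\[
\mathcal F_b = U^{-1}\, \mathcal D_b\, U,
\qquad
\mathcal D_b g(\xi) = c_b \int_{\mathbb R} E_b(-i\xi x)\, g(x)\, |x|^{2b}\,dx ,
\]
where $E_b(w) = \Gamma(b+\tfrac12)\bigl(\widetilde J_{b-\frac12}(iw) + \tfrac{w}{2}\widetilde J_{b+\frac12}(iw)\bigr)$ is the Dunkl kernel and $c_b$ is the normalizing constant. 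Before using this, I will verify that the kernel in the definition of $\mathcal F_b$ is exactly $|\xi|^{b}|x|^{b}$ times the Dunkl kernel $E_b(-i\xi x)$, with constant $2^{-b-1/2}\Gamma(b+\tfrac12)^{-1}$, i.e.\ that the signs in the odd part $-i(\xi x/2)\widetilde J_{b+1/2}$ match. This is a direct series comparison.

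Item 1 is immediate. For $b=0$ the second term of $D_b$ vanishes, and $\widetilde J_{-1/2}(w) = \pi^{-1/2}\cos w$, $(w/2)\widetilde J_{1/2}(w) = \pi^{-1/2}\sin w$. So the kernel becomes $(2\pi)^{-1/2}e^{-i\xi x}$.

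Item 3 I plan to prove directly rather than by citation, since the computation is short. The function $\psi_x(\xi) := |\xi x|^{b} E_b(-i\xi x)$ satisfies $D_{b,\xi}\psi_x = -ix\,\psi_x$; this follows from $T_{b,\xi}E_b(-i\xi x) = -ix\,E_b(-i\xi x)$ conjugated by $|\xi|^{b}$. Integrating by parts in $\int \psi_\xi(x) D_b f(x)\,dx$ then gives the first identity. The key observation is that the formal adjoint of $D_b$ is $-D_b$: for $g(x) = -(b/x)f(-x)$ we have $\int \varphi\, g = \int \bigl((b/x)\varphi(-x)\bigr) f$ after $x\mapsto -x$. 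The boundary terms vanish because $b > -\tfrac12$ makes $|x|^{2b}$ locally integrable and the products decay at infinity. Symmetry of the kernel in $(x,\xi)$ gives the second identity.

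Item 5 reduces via $U$ to the classical facts $T_b\mathcal S\subset\mathcal S$ and $\mathcal D_b\mathcal S\subset\mathcal S$. The first holds because $(f(x)-f(-x))/x = \int_{-1}^{1} f'(tx)\,dt$ is Schwartz; the second is de Jeu's theorem \cite{MR1199124}.

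Item 2 and the first half of item 4 come from the Plancherel and inversion theorem for $\mathcal D_b$ (de Jeu), namely $\mathcal D_b^{-1} g(x) = \mathcal D_b g(-x)$, transported by the unitary $U$. Since $\overline{\mathcal F_b}$ has kernel $E_b(+i\xi x)$, inversion is exactly $\overline{\mathcal F_b}\mathcal F_b = 1$, and complex conjugation gives the reverse order. Moreover $\mathcal F_b^{2} f(x) = f(-x)$, hence $\mathcal F_b^4 = 1$. Unitarity follows from isometry on the dense subspace $|x|^{b}\mathcal S$; I will check density, which holds because $\mathcal S$ is dense in the weighted $L^2$ space.

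For essential skew-adjointness, item 3 shows that $\mathcal F_b D_b \mathcal F_b^{-1}$ is multiplication by $i\xi$ on the invariant core $|\xi|^{b}\mathcal S$. It therefore suffices to show that $|\xi|^{b}\mathcal S$ is a core for the skew-adjoint multiplication operator $i\xi$. I expect this density-in-graph-norm step to be the main obstacle. My approach is to approximate $g$ with $(1+|\xi|)g\in L^2$ by $|\xi|^{b}\varphi_n$ with $\varphi_n\in C_c^\infty(\mathbb R\setminus\{0\})$, which is possible since $g|\xi|^{-b}$ lies in the weighted space with weight $(1+\xi^2)|\xi|^{2b}$. A cleaner alternative is to verify $\ker(D_b^* \mp 1) = 0$ by transforming to the equation $(i\xi\mp1)\hat u = 0$.
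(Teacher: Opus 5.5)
Your route is correct in outline but genuinely different from the paper's.

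\textbf{The paper's route.} The paper stays in $L^2(\mathbb R,dx)$. It identifies $\mathcal F_b$ with $i^{b+\frac12}e^{\frac{\pi i}{4}(D_b^2-|x|^2)}$ by expanding the kernel in the Laguerre eigenbasis $\varphi_\ell$ and summing with the Hille--Hardy formula. Items 2--4 are then read off from $\mathcal F_b\varphi_\ell=i^{-\ell}\varphi_\ell$ and the three-term recursions for $D_b\varphi_\ell$ and $x\varphi_\ell$. Item 5 comes from a $T_b$-characterization of $\mathcal S(\mathbb R)$.

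\textbf{Your route.} You conjugate by $|x|^{b}$ to the weighted Dunkl transform and import Plancherel, inversion and Schwartz invariance. You prove item 3 directly from $T_{b,\xi}E_b(-i\xi x)=-ixE_b(-i\xi x)$ and the antisymmetry of $D_b$ under the bilinear pairing.

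\textbf{What each buys.} The paper's method is self-contained and uniform in $b>-\frac12$: unitarity and $\mathcal F_b^4=1$ are automatic for a function of a self-adjoint operator with a complete eigenbasis. The price is the Hille--Hardy identification and its Abel-summation justification. Yours is shorter and more transparent for items 3 and 4. Your core argument also supplies a step the paper's sketch passes over, since essential self-adjointness of multiplication by $\xi$ depends on the domain. That argument uses density of $C_c^\infty(\mathbb R\setminus\{0\})\subset|\xi|^{b}\mathcal S(\mathbb R)$ in the graph norm of multiplication by $i\xi$. It combines this with $\mathcal F_b(|x|^b\mathcal S(\mathbb R))=|\xi|^b\mathcal S(\mathbb R)$, which follows from item 5 and $\mathcal F_b^4=1$.

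\textbf{One point needs repair.} The Plancherel, inversion and Schwartz-invariance theorems you intend to import are proved for multiplicity $k\ge0$. The statement requires all $b>-\frac12$, and the paper uses $-\frac12<b<0$ explicitly (see its remark on the behavior at $x=0$). This is not a formality. For $-\frac12<b<0$ the function $\widetilde J_{b-\frac12}(w)$ oscillates with amplitude of order $|w|^{-b}\to\infty$, so $E_b(-i\xi x)$ is unbounded and the $L^1$-based arguments do not apply verbatim.

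You can close this by splitting into even and odd parts. Write $H_\alpha g(\xi)=\int_0^\infty g(x)\widetilde J_\alpha(\xi x)x^{2\alpha+1}dx$. On even $g$, $|x|^{-b}\mathcal F_b|x|^{b}g$ is a constant multiple of $H_{b-\frac12}g$. On odd $g=xh$, it is a constant multiple of $\xi H_{b+\frac12}h$. Both orders exceed $-1$, so Plancherel and inversion are classical. Schwartz invariance follows from $\xi^2H_\alpha=-H_\alpha L_\alpha$ with $L_\alpha=\partial_x^2+\frac{2\alpha+1}{x}\partial_x$, together with polynomial bounds on $\widetilde J_\alpha$ and its derivatives. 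Alternatively, use the paper's Laguerre basis, which is available precisely because $b\mp\frac12>-1$.

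\textbf{Integration by parts in item 3 for $b<0$.} The two pieces $\int\varphi f'$ and $\int\varphi(x)\frac{b}{x}f(-x)$ are not separately integrable, since their integrands are of size $|x|^{2b-1}$ near $0$. Integrate by parts over $\{|x|>\epsilon\}$, a set preserved by the reflection. The boundary terms then combine to $(\varphi f)(-\epsilon)-(\varphi f)(\epsilon)=O(\epsilon^{2b+1})$, which tends to $0$ because $b>-\frac12$.
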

			
			\vspace{3pt}
			\begin{proof}[Sketch of proof]
				$D_{b}^{2}-|x|^{2}$ has purely discrete spectrum (when $b=0$, it is the harmonic oscillator), and its eigenvectors are given in terms of Laguerre polynomials $L^{(\nu)}_{\ell}(t)$ as $\varphi_{2\ell}(x):= |x|^{b}e^{-\frac{x^{2}}{2}}L^{(b-\frac{1}{2})}_{\ell}(x^{2})$ and $\varphi_{2\ell+1}(x) := |x|^{b}x\,e^{-\frac{x^{2}}{2}}L^{(b+\frac{1}{2})}_{\ell}(x^{2})$\hspace{3pt} ($\ell\in \mathbb{Z}_{\ge0})$. The eigenvalue corresponding to $\varphi_{\ell}(x)$ is $-(2b+1+2\ell)$. In light of this, we define $\widetilde{\mathcal{F}_{b}} := i^{b+\frac{1}{2}}e^{\frac{\pi i}{4}(D_{b}^{2}-|x|^{2})}$. Since $\widetilde{\mathcal F_b}\varphi_\ell=i^{-\ell}\varphi_\ell$, the corresponding integral kernel is given by the eigenfunction expansion $\sum_{\ell=0}^{\infty}i^{-\ell}\frac{\varphi_{\ell}(\xi)\overline{\varphi_{\ell}(x)}}{\langle \varphi_{\ell},\varphi_{\ell}\rangle}$ (for a rigorous justification, one may use Abel summation, or equivalently a holomorphic semigroup argument; see \cite{MR2956043}). Using the Hille--Hardy formula, one obtains a closed expression for this series, which coincides with the kernel defining $\mathcal F_b$. Hence $\widetilde{\mathcal{F}_{b}}=\mathcal{F}_{b}$.\par
				
				\vspace{7pt}
				With this preparation, we see 1--5. \par 
				
				\vspace{3pt}
				
				\begin{enumerate}
					\item Since $\widetilde{J}_{-\frac{1}{2}}\left(u\right)=\frac{1}{\Gamma(1/2)}\cos(u)$ and $\frac{u}{2}\widetilde{J}_{\frac{1}{2}}\left(u\right)=\frac{1}{\Gamma(1/2)}\sin(u)$, $\mathcal{F}_{0}$ is equal to the Fourier transform. $D_{0}=\frac{d}{dx}$ follows from the definition.
					\item Since $\mathcal{F}_{b} = i^{b+\frac{1}{2}}e^{\frac{\pi i}{4}(D_{b}^{2}-|x|^{2})}$, $\mathcal{F}_{b}\varphi_{\ell}(x)=i^{-\ell}\varphi_{\ell}(x)$ holds and this shows the claim. 
					\item It follows from the computation $D_{b}\varphi_{2\ell}= -\varphi_{2\ell+1}-\varphi_{2\ell-1}$, $D_{b}\varphi_{2\ell+1}= (\ell+1)\varphi_{2\ell+2}+\left(\ell+\frac{1}{2}+b\right)\varphi_{2\ell}$, $x\varphi_{2\ell}=\varphi_{2\ell+1}-\varphi_{2\ell-1}$, $x\varphi_{2\ell+1}= -(\ell+1)\varphi_{2\ell+2}+\left(\ell+\frac{1}{2}+b\right)\varphi_{2\ell}$ and $\mathcal{F}_{b}\varphi_{\ell}(x)=i^{-\ell}\varphi_{\ell}(x)$.
					\item The unitarity of $\mathcal{F}_{b}$ follows from the representation $\mathcal{F}_{b} = i^{b+\frac{1}{2}}e^{\frac{\pi i}{4}(D_{b}^{2}-|x|^{2})}$. By item~3, $D_b$ is unitarily equivalent via $\mathcal F_b$ to multiplication by $i\xi$. Since multiplication by $\xi$ is essentially self-adjoint, $D_b$ is essentially skew-adjoint.
					\item It is equivalent to show that  $\left(|x|^{-b}\,D_{b}\,|x|^{b}\right)\mathcal{S}(\mathbb{R})\subset \mathcal{S}(\mathbb{R})$,\hspace{6pt} and\hspace{6pt} $\left(|x|^{-b} \,\mathcal{F}_{b}\,|x|^{b} \right)\mathcal{S}(\mathbb{R})\subset \mathcal{S}(\mathbb{R})$. We recall that $T_{b}f(x) = |x|^{-b}D_{b}|x|^{b}f(x) =\frac{d f}{d x}(x)+b\frac{f(x)-f(-x)}{x}$. Since the difference quotient $\frac{f(x)-f(-x)}{x}$ maps Schwartz functions to Schwartz functions, $T_{b}\,\mathcal{S}(\mathbb{R})\subset \mathcal{S}(\mathbb{R})$. Moreover $\mathcal{S}(\mathbb{R})$ is characterized as the space of functions $f\in C^{\infty}(\mathbb{R})$ such that for all $m,n\in\mathbb{N}$, $|x|^{m}T_{b}^{n}f(x)$ is bounded. This condition is invariant under $|x|^{-b} \,\mathcal{F}_{b}\,|x|^{b}$ by item~3 and hence  $\left(|x|^{-b} \,\mathcal{F}_{b}\,|x|^{b} \right)\mathcal{S}(\mathbb{R})\subset \mathcal{S}(\mathbb{R})$ holds.
				\end{enumerate}
				\end{proof}
				
				\subsection{Main Theorem}\label{1.2}
				
				From Fact~\ref{fct}, item 4, there exists a one-parameter unitary group $e^{tD_{b}}$ generated by $D_{b}$. The main result of this paper is an explicit computation of $e^{tD_{b}}$. 
				
				\vspace{12pt}
				We define the Legendre function of the second kind as
				\begin{align*}
					\MoveEqLeft \hspace{24pt}Q_{\nu}(w):=\frac{1}{2}\frac{\Gamma\left(\frac{\nu+1}{2}\right)\Gamma\left(\frac{\nu+2}{2}\right)}{\Gamma\left(\nu+\frac{3}{2}\right)}\frac{1}{w^{\nu+1}} {}_{2}F_{1}\left(\begin{matrix}
						\frac{\nu+1}{2}, \frac{\nu+2}{2} \\ \nu+\frac{3}{2} \\
					\end{matrix};\frac{1}{w^2}\right)  &\\& \hspace{36pt}=\frac{1}{2}\sum_{m=0}^{\infty}\frac{\Gamma\left(\frac{\nu+1}{2}+m\right)\Gamma\left(\frac{\nu+2}{2}+m\right)}{\Gamma\left(\nu+3/2+m\right)m!}w^{-2m-\nu-1} \hspace{12pt}(1<w)
				\end{align*}
				together with its analytic continuation.\par
				We set 
				\[\Psi_{b}\left(w\right):= bw^{b}\Bigl(Q_{b-1}\left(w\right)-Q_{b}\left(w\right)  \Bigr)\]
				and
				\[\Phi_{b}(x,y;z):= \left(\frac{x^2+y^2-z^2}{2}\right)^{-b}\Psi_{b}\left(\frac{x^2+y^2-z^2}{2xy}\right) .\]
				
				With this function, $e^{tD_{b}}$ admits a boundary value representation:
				
				\vspace{6pt}
				\phantomsection\label{mtA}
				\begin{namedthm}{Theorem A (see Theorem~\ref{mainA})}
					For $b > -\frac{1}{2}$ and $f\in |y|^{b}\mathcal{S}(\mathbb{R})$,
					\[e^{tD_{b}}f(x)= |x|^{b}\lim_{\varepsilon\rightarrow +0}\int_{-\infty}^{\infty}\frac{-1}{2\pi i}\left(\frac{\Phi_{b}\bigl(x,y;t+i\varepsilon\bigr)}{x+(t+i\varepsilon)-y}  -\frac{\Phi_{b}\bigl(x,y;t-i\varepsilon\bigr)}{x+(t-i\varepsilon)-y} \right)f(y)\,|y|^{b}dy. \]
				\end{namedthm}
				Here the boundary values are taken from the upper and lower half-planes in the variable \(z\). 
				
				In addition to the boundary value representation, one can rewrite $e^{tD_b}f(x)$ as a real-variable formula.
				
				We set
				\[K_{b}(x,y;t) := 
				\frac{-1}{2\pi i} |x|^{b}|y|^{b}\lim_{\varepsilon\rightarrow +0}\Bigl(\Phi_{b}\bigl(x,y;t+i\varepsilon\bigr)-\Phi_{b}\bigl(x,y;t-i\varepsilon\bigr)\Bigr).  \]
				
				\vspace{6pt}
				Then,
				
				\phantomsection\label{mtB}
				\begin{namedthm}{Theorem B (see Theorem~\ref{mainB})}
					For $b > -\frac{1}{2}$, $x(x+t)\neq 0$ and $f\in |y|^{b}\mathcal{S}(\mathbb{R})$,
					\begin{align*}
						\MoveEqLeft e^{tD_{b}}f(x) &\\
						\MoveEqLeft=\begin{dcases}
							f(x+t)+ \int_{\left||x|-|y|\right|<|t|} p.v._{y}\left(\frac{1}{x+t-y}\right)K_{b}(x,y;t)f(y)\,dy & \text{if}\hspace{14pt} x(x+t)>0, \\
							f(x+t)\cos(b\pi) + \int_{\left||x|-|y|\right|<|t|} p.v._{y}\left(\frac{1}{x+t-y}\right) K_{b}(x,y;t)f(y)\,dy & \text{if}\hspace{14pt} x(x+t) < 0.\\
						\end{dcases}
					\end{align*}
					
				\end{namedthm}
				Here the principal value is taken at \(y=x+t\). The behavior at \(x=0\) is treated in Remark~\ref{x=0}.
				
				Moreover, the kernel $K_b(x,y;t)$ appearing in Theorem B admits the following explicit expression.
				
				\phantomsection\label{mtC}
				\begin{namedthm}{Theorem C (see Theorem~\ref{eva})}
				\begin{align*}
					\MoveEqLeft K_{b}(x,y;t)  =b\,\mathrm{sgn}(t) \\
					\MoveEqLeft\hspace{0pt}\times\begin{dcases}
						0 & \text{if }\hspace{3pt} |t| < \bigl||x|-|y|\bigr|, \\
						\frac{1}{2}\left\{- P_{b-1}\left(\frac{x^{2}+y^{2}-t^{2}}{2|x||y|}  \right)+\mathrm{sgn}(xy)P_{b}\left(\frac{x^{2}+y^{2}-t^{2}}{2|x||y|}  \right)  \right\} & \text{if }\hspace{3pt} \bigl||x|-|y|\bigr| <|t| < |x|+|y|, \\
						-\frac{\sin(b\pi)}{\pi}\left\{Q_{b-1}\left(-\frac{x^{2}+y^{2}-t^{2}}{2|x||y|}  \right) +\mathrm{sgn}(xy)Q_{b}\left(-\frac{x^{2}+y^{2}-t^{2}}{2|x||y|}  \right) \right\} & \text{if }\hspace{3pt} |x|+|y| <|t|.
					\end{dcases} 
				\end{align*}
			\end{namedthm}
				
				Here
				\[P_{\nu}(u):={}_{2}F_{1}\left(\begin{matrix}
					-\nu,\nu+1  \\ 1 \\
				\end{matrix};\frac{1-u}{2}\right) =\sum_{m=0}^{\infty}\frac{(-\nu)_{m}(\nu+1)_{m}}{m!m!}\left(\frac{1-u}{2}\right)^{m} \hspace{12pt} (-1 < u \le 1 )\]
				
				is the Legendre function of the first kind.

				\vspace{16pt}
				
				\begin{rmk}[An analogue of the finite propagation property]
					By Main Theorem \hyperref[mtB]{B}, the value of \(e^{tD_b}f(x)\) depends only on the values of \(f(y)\) for \(\bigl||x|-|y|\bigr|\le|t|\). 
					In particular, if \(f(y)=g(y)\) for \(\bigl||x|-|y|\bigr|\le|t|\), then
					\[
					e^{tD_b}f(x)=e^{tD_b}g(x).
					\]
				\end{rmk}
				
				\begin{rmk}[Expanded form of Main Theorem B]
					Using Theorem~\hyperref[mtC]{C}, we can write $e^{tD_b}f(x)$ explicitly in terms of Legendre functions,  as follows, without using $K_b(x,y;t)$. We note that, in Case 3, the integral is understood in the sense of Cauchy's principal value at $y= x + t$.

					\begin{enumerate}
						\item When $|x| > |t| $, 
						\begin{align*}
							\MoveEqLeft e^{tD_{b}}f(x)= f(x+t) &\\
							& +\frac{b}{2} \, \int_{x-t}^{x+t} \frac{\, -P_{b-1}\left(\frac{x^2+y^2-t^2}{2|x||y|}\right) +P_{b}\left(\frac{x^2+y^2-t^2}{2|x||y|}\right) }{x+t-y}f(y)\,dy \\
							& +\frac{b}{2} \, \int_{-x-t}^{-x+t} \frac{\, -P_{b-1}\left(\frac{x^2+y^2-t^2}{2|x||y|}\right) -P_{b}\left(\frac{x^2+y^2-t^2}{2|x||y|}\right) }{x+t-y}f(y)\,dy. 
						\end{align*}
						\item When $|x| < |t| $ and $xt >0$,
						\begin{align*}
							\MoveEqLeft e^{tD_{b}}f(x)= f(x+t) &\\
							& +\frac{b}{2} \, \int_{-x+t}^{x+t} \frac{\, -P_{b-1}\left(\frac{x^2+y^2-t^2}{2|x||y|}\right) +P_{b}\left(\frac{x^2+y^2-t^2}{2|x||y|}\right) }{x+t-y}f(y)\,dy \\
							& -\frac{b\sin(b\pi)}{\pi} \, \int_{0}^{-x+t} \frac{\, Q_{b-1}\left(-\frac{x^2+y^2-t^2}{2|x||y|}\right) +Q_{b}\left(-\frac{x^2+y^2-t^2}{2|x||y|}\right) }{x+t-y}f(y)\,dy \\
							& -\frac{b\sin(b\pi)}{\pi} \, \int_{x-t}^{0} \frac{\, Q_{b-1}\left(-\frac{x^2+y^2-t^2}{2|x||y|}\right) -Q_{b}\left(-\frac{x^2+y^2-t^2}{2|x||y|}\right) }{x+t-y}f(y)\,dy \\
							& +\frac{b}{2} \, \int_{-x-t}^{x-t} \frac{\, -P_{b-1}\left(\frac{x^2+y^2-t^2}{2|x||y|}\right) -P_{b}\left(\frac{x^2+y^2-t^2}{2|x||y|}\right) }{x+t-y}f(y)\,dy. 
						\end{align*}
						\item When $|x| < |t| $ and $xt < 0$,
						\begin{align*}
							\MoveEqLeft e^{tD_{b}}f(x)= f(x+t)\cos(b\pi) &\\
							& +\frac{b}{2} \, \int_{x+t}^{-x+t} \frac{\, -P_{b-1}\left(\frac{x^2+y^2-t^2}{2|x||y|}\right) -P_{b}\left(\frac{x^2+y^2-t^2}{2|x||y|}\right) }{x+t-y}f(y)\,dy \\
							& -\frac{b\sin(b\pi)}{\pi} \, \int_{0}^{x+t} \frac{\, Q_{b-1}\left(-\frac{x^2+y^2-t^2}{2|x||y|}\right) -Q_{b}\left(-\frac{x^2+y^2-t^2}{2|x||y|}\right) }{x+t-y}f(y)\,dy \\
							& -\frac{b\sin(b\pi)}{\pi} \, \int_{-x-t}^{0} \frac{\, Q_{b-1}\left(-\frac{x^2+y^2-t^2}{2|x||y|}\right) +Q_{b}\left(-\frac{x^2+y^2-t^2}{2|x||y|}\right) }{x+t-y}f(y)\,dy \\
							& +\frac{b}{2} \, \int_{x-t}^{-x-t} \frac{\, -P_{b-1}\left(\frac{x^2+y^2-t^2}{2|x||y|}\right) +P_{b}\left(\frac{x^2+y^2-t^2}{2|x||y|}\right) }{x+t-y}f(y)\,dy. 
						\end{align*}
						
					\end{enumerate}
					
					\vspace{12pt}
					
				\end{rmk}

			\section{Proof of the Main Theorem}
			
			\subsection{Outline of the Proof}
			
			We will show Main Theorems~\hyperref[mtA]{A}, \hyperref[mtB]{B}, and  \hyperref[mtC]{C} in Subsections~\ref{4.1}--\ref{4.6}. We now outline the argument.
			
			Let $B_{b}(\xi,x):=\frac{1}{2^{b+1/2}}|\xi|^{b}|x|^{b}\left(\widetilde{J}_{b-\frac{1}{2}}\bigl(\xi x\bigr) -i\,\left(\frac{\xi x}{2}\right)\widetilde{J}_{b+\frac{1}{2}}\bigl(\xi x\bigr)\right)$. 
			Since $e^{tD_{b}}= \mathcal{F}_{b}\,e^{-itx}\, \mathcal{F}_{b}^{-1}$,
			
			\begin{align*}
				\MoveEqLeft e^{tD_{b}}f(x) = \int_{-\infty}^{\infty} B_{b}(x,\xi)
				e^{-it\xi}
				\left(\int_{-\infty}^{\infty} \overline{B_{b}(\xi,y)}f(y)\,dy\right)d\xi.& 
			\end{align*}
			
			In Subsection~\ref{4.1}, we decompose it into its positive- and negative-frequency parts as 
			\begin{align}\label{decomp2}
				\MoveEqLeft e^{tD_{b}}f(x) = \int_{0}^{\infty} B_{b}(x,\xi)
				e^{-it\xi}
				\left(\int_{-\infty}^{\infty} \overline{B_{b}(\xi,y)}f(y)dy\right)d\xi \nonumber\\
				&+ \int_{-\infty}^{0} B_{b}(x,\xi)
				e^{-it\xi}
				\left(\int_{-\infty}^{\infty} \overline{B_{b}(\xi,y)}f(y)dy\right)d\xi& 
			\end{align}
			and extend each term to the lower and the upper half-planes, respectively. Then, in the interior of each plane, we may apply Fubini's theorem. 
			
			\vspace{12pt}
			In Subsection~\ref{4.2}, we evaluate $ \int_{0}^{\infty} B_{b}(x,\xi)\overline{B_{b}(y,\xi)}
			e^{-iz\xi}d\xi \hspace{12pt} (\mathrm{Im}(z) <0 )$. \par 
			We set 
			$\Psi_{b}\left(w\right):=bw^{b}\Bigl(Q_{b-1}\left(w\right)- Q_{b}\left(w\right) \Bigr).$
			Then for $b >-\frac{1}{2}$ and $ \mathrm{Im}(z) <0$,
			\begin{equation}\label{int3}
				\int_{0}^{\infty} B_{b}(x,\xi)\overline{B_{b}(y,\xi)}
				e^{-iz\xi}d\xi  = \frac{1}{2\pi i}\frac{1}{x+z-y}|x|^{b}|y|^{b}\left(\frac{x^2+y^2-z^2}{2}\right)^{-b} \Psi_{b}\left(\frac{x^2+y^2-z^2}{2xy}\right).
			\end{equation}
			
			\vspace{12pt}
			In Subsection~\ref{4.3}, by Formulas~\eqref{decomp2} and \eqref{int3}, we obtain the boundary value representation. \\ We set 
			$\Phi_{b}(x,y;z):= \left(\frac{x^2+y^2-z^2}{2}\right)^{-b}\Psi_{b}\left(\frac{x^2+y^2-z^2}{2xy}\right) .$
			Then,
			 
			\[e^{tD_{b}}f(x)= |x|^{b}\lim_{\varepsilon\rightarrow +0}\int_{-\infty}^{\infty}\frac{-1}{2\pi i}\left(\frac{\Phi_{b}\bigl(x,y;t+i\varepsilon\bigr)}{x+(t+i\varepsilon)-y}  -\frac{\Phi_{b}\bigl(x,y;t-i\varepsilon\bigr)}{x+(t-i\varepsilon)-y} \right)f(y)\,|y|^{b}dy. \] 
			This is our Main Theorem~\hyperref[mtA]{A}.
			By the identities, $\frac{1}{x+i0}=p.v.\left(\frac{1}{x}\right)-i\pi\delta(x)$
			and $\frac{1}{x-i0}=p.v.\left(\frac{1}{x}\right)+i\pi\delta(x)$ we obtain 
			\begin{align}
				\MoveEqLeft e^{tD_{b}}f(x) = |x|^{b}\int_{-\infty}^{\infty}\delta\bigl(x+t-y\bigr) \frac{1}{2}\Bigl(\Phi_{b}\bigl(x,y;t+i0\bigr)+\Phi_{b}\bigl(x,y;t-i0\bigr)\Bigr)f(y)\,|y|^{b}dy\nonumber \\
				\MoveEqLeft \hspace{43pt}+|x|^{b}\int_{-\infty}^{\infty}p.v._{y}\left(\frac{1}{x+t-y}\right)
				\frac{-1}{2\pi i} \Bigl(\Phi_{b}\bigl(x,y;t+i0\bigr)-\Phi_{b}\bigl(x,y;t-i0\bigr)\Bigr)f(y)\,|y|^{b}dy .\label{dist} 
			\end{align}
			
			\vspace{3pt}
			In Subsection~\ref{4.4}, we evaluate $\lim_{y\to x+t}\Bigl(\Phi_{b}\bigl(x,y;t+i0\bigr)+\Phi_{b}\bigl(x,y;t-i0\bigr)\Bigr)$ which computes the contribution of the $\delta$-term in Formula~\eqref{dist}.
			
			\vspace{12pt}
			In Subsection~\ref{4.5}, we complete the proof of our Main Theorem~\hyperref[mtB]{B}.
			
			\vspace{12pt}
			In Subsection~\ref{4.6}, we prove Main Theorem~\hyperref[mtC]{C} by investigating the boundary behavior of $Q_{\nu}(w)$ along the real axis.
			This evaluates 
			$K_{b}(x,y;t):=\frac{-1}{2\pi i} |x|^{b}|y|^{b}\Bigl(\Phi_{b}\bigl(x,y;t+i0\bigr)-\Phi_{b}\bigl(x,y;t-i0\bigr)\Bigr)$ and hence computes the contribution of the Cauchy principal value term in Formula~\eqref{dist}.
			
			\vspace{12pt}
			
			\subsection{Positive and Negative Frequency Decomposition}\label{4.1}
			
			Since \(e^{tD_{b}}= \mathcal{F}_{b}\,e^{-itx}\, \mathcal{F}_{b}^{-1}\),
			\begin{align*}
				e^{tD_{b}}f(x)
				=
				\int_{-\infty}^{\infty} B_{b}(x,\xi)e^{-it\xi}
				\left(\int_{-\infty}^{\infty} \overline{B_{b}(\xi,y)}f(y)\,dy\right)d\xi .
			\end{align*}
			Here
			$B_{b}(\xi,x):=\frac{1}{2^{b+1/2}}|\xi|^{b}|x|^{b}
			\left(\widetilde{J}_{b-\frac{1}{2}}(\xi x)
			-i\left(\frac{\xi x}{2}\right)\widetilde{J}_{b+\frac{1}{2}}(\xi x)\right)$.
			
			\vspace{12pt}
			
			We decompose the above formula into its positive- and negative-frequency parts. We set
			\begin{align*}
				I_{+}(t,x;f)
				&:=
				\int_{0}^{\infty} B_{b}(x,\xi)e^{-it\xi}
				\left(\int_{-\infty}^{\infty} \overline{B_{b}(\xi,y)}f(y)\,dy\right)d\xi,\\
				I_{-}(t,x;f)
				&:=
				\int_{-\infty}^{0} B_{b}(x,\xi)e^{-it\xi}
				\left(\int_{-\infty}^{\infty} \overline{B_{b}(\xi,y)}f(y)\,dy\right)d\xi .
			\end{align*}
			Then
			\[
			e^{tD_{b}}f(x)=I_{+}(t,x;f)+I_{-}(t,x;f).
			\]
			
			Using \(B_{b}(x,-\xi)=\overline{B_{b}(x,\xi)}\) and \(\overline{B_b(-\xi,y)}=B_b(\xi,y)\), we have
			
			\[I_{-}(t,x;f)=\overline{I_{+}(t,x;\overline{f})}.\]
			
			We extend the real parameter \(t\) to a complex variable \(z\) in the lower and upper half-planes by
			\begin{align*}
				I_{+}(z,x;f)
				&:=
				\int_{0}^{\infty} B_{b}(x,\xi)e^{-iz\xi}
				\left(\int_{-\infty}^{\infty} \overline{B_{b}(\xi,y)}f(y)\,dy\right)d\xi
				\qquad (\mathrm{Im}(z)\le 0),\\
				I_{-}(z,x;f)
				&:=
				\int_{-\infty}^{0} B_{b}(x,\xi)e^{-iz\xi}
				\left(\int_{-\infty}^{\infty} \overline{B_{b}(\xi,y)}f(y)\,dy\right)d\xi
				\qquad (\mathrm{Im}(z)\ge 0).
			\end{align*}
			
			When \(\mathrm{Im}(z)<0\), we may apply Fubini's theorem to \(I_{+}(z,x;f)\), and obtain
			\begin{align*}
				I_{+}(z,x;f)
				=
				\int_{-\infty}^{\infty}
				\left(
				\int_{0}^{\infty} B_{b}(x,\xi)\overline{B_{b}(y,\xi)}e^{-iz\xi}\,d\xi
				\right)
				f(y)\,dy .
			\end{align*}

			\vspace{12pt}
			\subsection{An Integral Formula Involving Bessel Functions}\label{4.2}
			In this subsection, we evaluate $ \int_{0}^{\infty} B_{b}(x,\xi)\overline{B_{b}(y,\xi)}
			e^{-iz\xi}d\xi $. 
			
				We set 
				\begin{align*}
					\MoveEqLeft \Psi_{b}\left(w\right):= bw^{b}\Bigl(Q_{b-1}\left(w\right)- Q_{b}\left(w\right) \Bigr) &\\  \MoveEqLeft =\frac{b}{2}\left(\sum_{m=0}^{\infty}\frac{\Gamma\left(\frac{b}{2}+m\right)\Gamma\left(\frac{b+1}{2}+m\right)}{\Gamma\left(b+1/2+m\right)m!}w^{-2m}-\sum_{m=0}^{\infty}\frac{\Gamma\left(\frac{b+1}{2}+m\right)\Gamma\left(\frac{b+2}{2}+m\right)}{\Gamma\left(b+3/2+m\right)m!}w^{-2m-1} \right)\hspace{12pt}(1<w)
				\end{align*}
				
				together with its analytic continuation. Here $Q_{\nu}(w)$ is the Legendre function of the second kind; see the beginning of Subsection~\ref{1.2} for its definition.
				
				\begin{rmk}[Properties of $\Psi_{b}\left(w\right)$]\label{Psi}
					We note some elementary properties of $\Psi_{b}\left(w\right)$.
					\begin{enumerate}
						\item $\Psi_{0}\left(w\right) =1$. 
						\item $\Psi_{b}\left(w\right)$ is single-valued on the domain $|w|>1$. 
						\item $\lim_{w\to \infty}\Psi_{b}\left(w\right)=\frac{\Gamma\left(\frac{b}{2}+1\right)\Gamma\left(\frac{b+1}{2}\right)}{\Gamma\left(b+\frac{1}{2}\right)}$.
					\end{enumerate}
					
					We also note that
					\[
					\lim_{w\to1}\Psi_b(w)=1,
					\]
					which will be shown in Lemma~\ref{del}.
					
				\end{rmk}
			
			\hspace{12pt}
			\begin{prp}\label{half}
				For $b >-\frac{1}{2}$ and $ \mathrm{Im}(z) <0$,
				\begin{align*}
					\MoveEqLeft \int_{0}^{\infty} B_{b}(x,\xi)\overline{B_{b}(y,\xi)}
					e^{-iz\xi}d\xi  \\
					&= \frac{1}{2\pi i}\frac{1}{x+z-y} \,b\left\{Q_{b-1}\left(\frac{x^2+y^2-z^2}{2|x||y|}\right)-\mathrm{sgn}(xy)Q_{b}\left(\frac{x^2+y^2-z^2}{2|x||y|}\right)\right\}\\
					& = \frac{1}{2\pi i}\frac{1}{x+z-y} |x|^{b}|y|^{b}\left(\frac{x^2+y^2-z^2}{2}\right)^{-b} \Psi_{b}\left(\frac{x^2+y^2-z^2}{2xy}\right).
				\end{align*}
			\end{prp}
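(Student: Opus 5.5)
We will compute the integral by splitting the Dunkl kernel into its even and odd parts and reducing to two classical Laplace-type integrals of products of Bessel functions. Writing $B_b(x,\xi)=2^{-b-1/2}|x|^b|\xi|^b\bigl(E(x\xi)-iO(x\xi)\bigr)$ with $E(u)=\widetilde J_{b-\frac12}(u)$ and $O(u)=\frac u2\widetilde J_{b+\frac12}(u)$, we have $E$ even, $O$ odd and both real. Then for $\xi>0$,
\[
B_b(x,\xi)\overline{B_b(y,\xi)}=2^{-2b-1}|x|^b|y|^b\xi^{2b}\Bigl(E(x\xi)E(y\xi)+O(x\xi)O(y\xi)-i\bigl(O(x\xi)E(y\xi)-E(x\xi)O(y\xi)\bigr)\Bigr).
\]
Expressing $\widetilde J_\nu(u)=(u/2)^{-\nu}J_\nu(u)$, each product becomes a multiple of $\xi\,J_{b\mp\frac12}(|x|\xi)J_{b\mp\frac12}(|y|\xi)$ or of $\xi\,J_{b-\frac12}(|x|\xi)J_{b+\frac12}(|y|\xi)$, with signs $\mathrm{sgn}(x)$, $\mathrm{sgn}(y)$, $\mathrm{sgn}(xy)$ recording parity. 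Set $s=iz$, so that $\mathrm{Re}\,s>0$ for $\mathrm{Im}\,z<0$, and $w=\frac{x^2+y^2-z^2}{2|x||y|}=\frac{x^2+y^2+s^2}{2|x||y|}$.

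The key inputs are the classical Laplace transforms (Gradshteyn--Ryzhik 6.612.3 and its differentiated forms):
\[
\int_0^\infty e^{-s\xi}J_\nu(a\xi)J_\nu(c\xi)\,d\xi=\frac{1}{\pi\sqrt{ac}}\,Q_{\nu-\frac12}\Bigl(\frac{a^2+c^2+s^2}{2ac}\Bigr),
\]
valid for $\mathrm{Re}\,s>0$ and $\nu>-\frac12$. The integrals with an extra factor $\xi$, namely $\int_0^\infty e^{-s\xi}\xi J_\nu(a\xi)J_\nu(c\xi)\,d\xi$, follow by applying $-\partial_s$, which by the chain rule gives $-\frac{s}{\pi (ac)^{3/2}}Q'_{\nu-\frac12}(w)$. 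For the mixed product $J_{b-\frac12}(a\xi)J_{b+\frac12}(c\xi)$ I would use the Bessel recurrence $\xi J_{\nu+1}(c\xi)=-\partial_c J_\nu(c\xi)+\frac{\nu}{c}J_\nu(c\xi)$ to reduce to $c$-derivatives of the first formula.

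I would then collect terms using the Legendre recurrences $(w^2-1)Q'_\nu=(\nu+1)(Q_{\nu+1}-wQ_\nu)$, or equivalently $(w^2-1)Q'_\nu = \nu(wQ_\nu - Q_{\nu-1})$, with $\nu=b-1$ and $\nu=b$. The identities $2|x||y|(w-1)=(|x|-|y|)^2+s^2$ and $2|x||y|(w+1)=(|x|+|y|)^2+s^2$ factor $w^2-1$. After inserting the signs, the combination should collapse to
\[
\frac{b}{2\pi i}\,\frac{1}{x+z-y}\bigl(Q_{b-1}(w)-\mathrm{sgn}(xy)Q_b(w)\bigr).
\]
Concretely, $x^2+y^2-z^2\pm2xy=(x\pm y)^2-z^2$. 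The factor $(x-y+z)$ cancels against one factor of $(x-y)^2-z^2$, while the complementary factor combines with the $\mathrm{sgn}(xy)$ pieces. The second equality in the statement is then just bookkeeping from the definition of $\Psi_b$: with $W=\frac{x^2+y^2-z^2}{2xy}=\mathrm{sgn}(xy)\,w$, the parity $Q_\nu(-W)=\mp$ (relative branch) changes $Q_b$ by the sign $\mathrm{sgn}(xy)$ relative to $Q_{b-1}$, and $W^{b}\bigl(\frac{x^2+y^2-z^2}{2}\bigr)^{-b}=(xy)^{-b}$. This needs care with the branch of the power when $xy<0$. Branches are fixed by continuity from $z=-i\tau$ with $\tau$ large, where $w>1$ and all quantities are positive.

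The main obstacle is the algebraic collapse in the preceding step. There are four Bessel products with different indices, and matching them to a single $\frac1{x+z-y}$ times a two-term Legendre combination requires the right pair of recurrences; the sign bookkeeping is error-prone. As a safeguard, I would first verify the case $b=0$, where the right side must reduce to $\frac{1}{2\pi i}\frac{1}{x+z-y}$, the Fourier case with $\Psi_0=1$. I would also check the limit $z\to-i\infty$ against the asymptotics of Remark~\ref{Psi}(3). Alternatively, if the direct computation becomes unwieldy, I would verify the claimed right side instead. It must be holomorphic in $\mathrm{Im}\,z<0$, and it must satisfy the same first-order system: $(\partial_z)$ acting equals $-iD_{b,x}$, using $D_{b,x}B_b(x,\xi)=i\xi B_b(x,\xi)$ from Fact~\ref{fct}(3). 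Uniqueness would then follow from matching behaviour as $\mathrm{Im}\,z\to-\infty$.
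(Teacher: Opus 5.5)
Your plan is essentially the paper's own route. The paper also starts from Gradshteyn--Ryzhik 6.612.3. In the $\widetilde J$ normalization this gives $\frac{1}{4\pi}Q_{b-1}(w)$ for the even--even product, and, used at index $b+1$, $\frac{1}{4\pi}\mathrm{sgn}(xy)Q_b(w)$ for the odd--odd product. The extra factor $\xi$ comes from $i\partial_z$ (your $-\partial_s$). The mixed products come from $-iD_{b,y}$, which on $|y|^b\widetilde J_{b-\frac12}(\xi y)$ is exactly your recurrence $\xi J_{\nu+1}(c\xi)=-\partial_cJ_\nu(c\xi)+\frac{\nu}{c}J_\nu(c\xi)$ written in Dunkl form.

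As written, however, your argument only covers $b>0$.
\begin{itemize}
\item The base formula is invoked at index $\nu=b-\frac12$, and its hypothesis $\nu>-\frac12$ is the same as $b>0$.
\item For $-\frac12<b\le 0$ the integral $\int_0^\infty e^{-s\xi}J_{b-\frac12}(a\xi)J_{b-\frac12}(c\xi)\,d\xi$ diverges at $\xi=0$.
\item Both your $-\partial_s$ step and your $\partial_c$-reduction of the mixed term pass through this divergent integral.
\end{itemize}
The missing step, which the paper supplies, is analytic continuation in $b$. The left side is holomorphic on $\mathrm{Re}\,b>-\frac12$: the integrand is $O(\xi^{2\mathrm{Re}\,b})$ at $0$ and exponentially small at $\infty$, locally uniformly in $b$. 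The right side is also holomorphic there. Indeed, $bQ_{b-1}(w)$ has only a removable singularity at $b=0$, since $b\,\Gamma(\frac b2)=2\,\Gamma(\frac b2+1)$, and $Q_b(w)$ is regular at $b=0$.

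The collapse you flag as the main obstacle is also left undone, and the cancellation you describe is reversed. The factor that cancels is $x-y-z$; the factor $x-y+z=x+z-y$ is the one that survives. The paper makes this step short by differentiating the single combination $C:=Q_{b-1}(w)+\mathrm{sgn}(xy)Q_b(w)$. Your two Legendre recurrences give
\begin{gather*}
\partial_z C=\frac{2bz}{(x-y+z)(x-y-z)}\bigl(Q_{b-1}(w)-\mathrm{sgn}(xy)Q_b(w)\bigr),\\
D_{b,y}C=\frac{2b(x-y)}{(x-y+z)(x-y-z)}\bigl(Q_{b-1}(w)-\mathrm{sgn}(xy)Q_b(w)\bigr).
\end{gather*}
Here the factor $(x+y)^2-z^2$ of $4x^2y^2(w^2-1)$ drops out precisely because of the $\mathrm{sgn}(xy)$ weighting. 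The integral equals $\frac{i}{4\pi}(\partial_z-D_{b,y})C$, and $z-(x-y)=-(x-y-z)$ then gives the claimed formula.

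Finally, your fallback uniqueness argument would not close the proof. The correct relation is $D_{b,x}B_b(x,\xi)=-i\xi B_b(x,\xi)$, not $+i\xi$; at $b=0$ one has $B_0(x,\xi)=(2\pi)^{-1/2}e^{-ix\xi}$. So the left side satisfies $\partial_zG=D_{b,x}G$. But every $\int_0^\infty B_b(x,\xi)\overline{B_b(y,\xi)}\,g(\xi)e^{-iz\xi}\,d\xi$ with bounded $g$ satisfies the same equation, and all of these decay as $\mathrm{Im}\,z\to-\infty$. Decay therefore cannot single out $g\equiv 1$.
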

			
			\begin{proof}
			
			Since 
			\begin{align*}
				\MoveEqLeft \int_{0}^{\infty} B_{b}(x,\xi)\overline{B_{b}(y,\xi)}
				e^{-iz\xi}d\xi  = \frac{1}{2^{2b+1}}|x|^{b}|y|^{b}  \\ 
				\MoveEqLeft\hspace{12pt} \times\int_{0}^{\infty}\left(\widetilde{J}_{b-\frac{1}{2}}\bigl(x\xi\bigr) -i\,\left(\frac{\xi x}{2}\right) \,\widetilde{J}_{b+\frac{1}{2}}\bigl(x \xi\bigr)\right)
				\left(\widetilde{J}_{b-\frac{1}{2}}\bigl(\xi y\bigr) +i\,\left(\frac{\xi y}{2}\right) \,\widetilde{J}_{b+\frac{1}{2}}\bigl(\xi y\bigr)\right) e^{-iz\xi}\,\xi^{2b}d\xi, 
			\end{align*}
			
			we compute it by the following Fact and Lemmas.
			\begin{fct}
				(See \cite[6.612,item~3]{MR3307944} and \cite[Section 13.22]{MR10746}, for references.)\par Suppose $\mathrm{Re}(\gamma\pm i\alpha \pm i\beta)>0$, and $\nu > 0$. Then,
				\begin{align*}
					\MoveEqLeft \frac{\alpha^{\nu}\beta^{\nu}}{2^{2\nu+1}}\int_{0}^{\infty}\,\,\widetilde{J}_{\nu-\frac{1}{2}}(\alpha \xi)\widetilde{J}_{\nu-\frac{1}{2}}(\beta \xi)e^{-\gamma \xi}\,\xi^{2\nu-1}d\xi = \frac{1}{4\pi}Q_{\nu-1}\left(\frac{\alpha^2+\beta^2+\gamma^2}{2\alpha\beta}\right).
				\end{align*}
			\end{fct}
			
			\begin{lmm}
				\[D_{b,y} \biggl(|y|^{b}\,\widetilde{J}_{b-1/2}\bigl(\xi y\bigr)\biggr)=-\xi\,\biggl(|y|^{b} \left(\frac{\xi y}{2}\right)\widetilde{J}_{b+1/2}\bigl(\xi y\bigr)\biggr)\]
				\[ D_{b,y} \biggl(|y|^{b} \left(\frac{\xi y}{2}\right)\widetilde{J}_{b+1/2}\bigl(\xi y\bigr)\biggr) = \xi\,\biggl(|y|^{b}\,\widetilde{J}_{b-1/2}\bigl(\xi y\bigr)\biggr).\]
			\end{lmm}
			\begin{proof}
				By Fact~\ref{fct}, item~3, $D_{b,y}B_{b}(\xi,y)=-i\xi B_{b}(\xi,y)$. This shows the claim. 
				
				\vspace{3pt}
				(We note that $B_{b}(\xi,y)=2^{-(b+1/2)}|\xi|^{b}|y|^{b}\left(\widetilde{J}_{b-\frac{1}{2}}\bigl(\xi y\bigr) -i\,\left(\xi y/2\right)\widetilde{J}_{b+\frac{1}{2}}\bigl(\xi y\bigr)\right)$.)
			\end{proof}
			\vspace{12pt}
			
			\begin{lmm}\label{diff}
			\begin{align*}
				\MoveEqLeft\frac{\partial}{\partial z} \left(Q_{b-1}\left(\frac{x^2+y^2-z^2}{2|x||y|}\right)\right) \nonumber&\\
				& =2b\,\frac{z(x^2+y^2-z^2)\,Q_{b-1}\left(\frac{x^2+y^2-z^2}{2|x||y|}\right)-2|x||y|z \,Q_{b}\left(\frac{x^2+y^2-z^2}{2|x||y|}\right)}{(x+y+z)(x+y-z)(x-y+z)(x-y-z)}
			\end{align*}
			\begin{align*}
				\MoveEqLeft\frac{\partial}{\partial z} \left(Q_{b}\left(\frac{x^2+y^2-z^2}{2|x||y|}\right)\right) \nonumber&\\ & =2b\,\frac{2|x||y|z\,Q_{b-1}\left(\frac{x^2+y^2-z^2}{2|x||y|}\right)-z(x^2+y^2-z^2)\,Q_{b}\left(\frac{x^2+y^2-z^2}{2|x||y|}\right)}{(x+y+z)(x+y-z)(x-y+z)(x-y-z)}
			\end{align*}
			\begin{align*}
				\MoveEqLeft D_{b,y} \left(Q_{b-1}\left(\frac{x^2+y^2-z^2}{2|x||y|}\right)\right) \nonumber&\\
				\MoveEqLeft\hspace{12pt} = 2b\,\mathrm{sgn}(y)\, \frac{|y|(x^2-y^2+z^2)\,Q_{b-1}\left(\frac{x^2+y^2-z^2}{2|x||y|}\right)+|x|(-x^2+y^2+z^2)\,Q_{b}\left(\frac{x^2+y^2-z^2}{2|x||y|}\right)}{(x+y+z)(x+y-z)(x-y+z)(x-y-z)} 
			\end{align*}
			\begin{align*}
				\MoveEqLeft D_{b,y} \left(\mathrm{sgn}(y) Q_{b}\left(\frac{x^2+y^2-z^2}{2|x||y|}\right)\right) \nonumber&\\
				\MoveEqLeft\hspace{12pt}  = 2b\,  \frac{-|x|(-x^2+y^2+z^2)\,Q_{b-1}\left(\frac{x^2+y^2-z^2}{2|x||y|}\right)-|y|(x^2-y^2+z^2)\,Q_{b}\left(\frac{x^2+y^2-z^2}{2|x||y|}\right)}{(x+y+z)(x+y-z)(x-y+z)(x-y-z)}.
			\end{align*}
		\end{lmm}
		\begin{proof}
			It follows from the computation using
			
			\[ (w^2-1)\frac{dQ_{b-1}(w)}{dw} = b\left(-wQ_{b-1}(w)+Q_{b}(w)\right) \]
			\[ (w^2-1)\frac{dQ_{b}(w)}{dw} = b\left(-Q_{b-1}(w)+wQ_{b}(w)\right)\]
			
			(see \cite[8.832, item~3 and 8.732, item~2]{MR3307944}, for a reference), 
			
			\[\left( \frac{x^2+y^2-z^2}{2|x||y|} \right)^{2}-1 
				= \frac{(x+y+z)(x+y-z)(x-y+z)(x-y-z)}{4x^2y^2},\]
			\[\frac{\partial}{\partial y}\frac{x^2+y^2-z^2}{2|x||y|} = \frac{1}{y}\frac{-x^2+y^2+z^2}{2|x||y|}, \quad\text{ and}\qquad \frac{\partial}{\partial z}\frac{x^2+y^2-z^2}{2|x||y|} = -\frac{z}{|x||y|}. \]
		\end{proof}
		
		By Lemma~\ref{diff},
		
		\begin{align*}
			\MoveEqLeft\frac{\partial}{\partial z} \Biggl(Q_{b-1}\left(\frac{x^2+y^2-z^2}{2|x||y|}\right)+\mathrm{sgn}(xy)Q_{b}\left(\frac{x^2+y^2-z^2}{2|x||y|}\right)\Biggr) \nonumber&\\
			& =2b\,\frac{z}{(x- y+z)(x- y-z)}\Biggl(Q_{b-1}\left(\frac{x^2+y^2-z^2}{2|x||y|}\right)- \mathrm{sgn}(xy)Q_{b}\left(\frac{x^2+y^2-z^2}{2|x||y|}\right)\Biggr) 
		\end{align*}
		
		\begin{align*}
			\MoveEqLeft D_{b,y} \Biggl(Q_{b-1}\left(\frac{x^2+y^2-z^2}{2|x||y|}\right)+\mathrm{sgn}(xy)Q_{b}\left(\frac{x^2+y^2-z^2}{2|x||y|}\right)\Biggr) \nonumber&\\
			\MoveEqLeft\hspace{12pt} = 2b\,\,\frac{x-y}{(x-y+z)(x-y-z)}\Biggl(Q_{b-1}\left(\frac{x^2+y^2-z^2}{2|x||y|}\right)-\mathrm{sgn}(xy)Q_{b}\left(\frac{x^2+y^2-z^2}{2|x||y|}\right)\Biggr) 
		\end{align*}
		
		\vspace{12pt}
		
		First we assume $b>0$. Then
		
		\begin{align*}
			\MoveEqLeft \int_{0}^{\infty} B_{b}(x,\xi)\overline{B_{b}(y,\xi)}
			e^{-iz\xi}d\xi  = \frac{1}{2^{2b+1}}|x|^{b}|y|^{b} &\\
			\MoveEqLeft\hspace{12pt} \times\int_{0}^{\infty}\left(\widetilde{J}_{b-\frac{1}{2}}\bigl(x\xi\bigr) -i\,\left(\frac{\xi x}{2}\right) \,\widetilde{J}_{b+\frac{1}{2}}\bigl(x \xi\bigr)\right)
			\left(\widetilde{J}_{b-\frac{1}{2}}\bigl(\xi y\bigr) +i\,\left(\frac{\xi y}{2}\right) \,\widetilde{J}_{b+\frac{1}{2}}\bigl(\xi y\bigr)\right) e^{-iz\xi}\,\xi^{2b}d\xi \\[15pt]
			\MoveEqLeft=\frac{1}{4\pi}\left\{i\frac{\partial}{\partial z}\left(Q_{b-1}\left(\frac{x^2+y^2-z^2}{2|x||y|}\right)\right)-iD_{b,y}\left(Q_{b-1}\left(\frac{x^2+y^2-z^2}{2|x||y|}\right)\right)\right. &\\
			\MoveEqLeft\left. \hspace{12pt}-iD_{b,y}\left(\mathrm{sgn}(xy)\,Q_{b}\left(\frac{x^2+y^2-z^2}{2|x||y|}\right)\right) +i\frac{\partial}{\partial z}\left( \mathrm{sgn}(xy)\,Q_{b}\left(\frac{x^2+y^2-z^2}{2|x||y|}\right)\right) \right\} & \\[15pt]
			\MoveEqLeft =\frac{1}{2\pi i}\frac{1}{x+z-y}\, b\left\{Q_{b-1}\left(\frac{x^2+y^2-z^2}{2|x||y|}\right)-\mathrm{sgn}(xy)Q_{b}\left(\frac{x^2+y^2-z^2}{2|x||y|}\right)\right\}&\\[6pt]
			\MoveEqLeft =\frac{1}{2\pi i}\frac{1}{x+z-y} |x|^{b}|y|^{b}\left(\frac{x^2+y^2-z^2}{2}\right)^{-b} \Psi_{b}\left(\frac{x^2+y^2-z^2}{2xy}\right).&
		\end{align*}
		
		By analytic continuation in \(b\), the claim follows for all \(b>-\frac12\).
		\end{proof}

		\vspace{12pt}
		\subsection{Boundary Value Representation}\label{4.3}
		
		We set
		\[
		\Phi_{b}(x,y;z):= \left(\frac{x^2+y^2-z^2}{2}\right)^{-b}\Psi_{b}\left(\frac{x^2+y^2-z^2}{2xy}\right) .
		\]
		
		Equivalently,
		\[
		|x|^b |y|^b \Phi_b(x,y;z)
		=
		b\left\{
		Q_{b-1}\left(\frac{x^2+y^2-z^2}{2|x||y|}\right)
		-\mathrm{sgn}(xy)\,
		Q_b\left(\frac{x^2+y^2-z^2}{2|x||y|}\right)
		\right\}.
		\]
		
		\vspace{12pt}
		
		\begin{thm}[Boundary Value Representation]\label{mainA}
		For $b > -\frac{1}{2}$ and $f\in |y|^{b}\mathcal{S}(\mathbb{R})$,
		\[e^{tD_{b}}f(x)= |x|^{b}\lim_{\varepsilon\rightarrow +0}\int_{-\infty}^{\infty}\frac{-1}{2\pi i}\left(\frac{\Phi_{b}\bigl(x,y;t+i\varepsilon\bigr)}{x+(t+i\varepsilon)-y}  -\frac{\Phi_{b}\bigl(x,y;t-i\varepsilon\bigr)}{x+(t-i\varepsilon)-y} \right)f(y)\,|y|^{b}dy \]
		\end{thm}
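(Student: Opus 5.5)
We plan to start from the decomposition $e^{tD_b}f(x)=I_+(t,x;f)+I_-(t,x;f)$ of Subsection~\ref{4.1}. Since $\mathcal F_b^{-1}f=\int\overline{B_b(\xi,y)}f(y)\,dy$ lies in $|\xi|^b\mathcal S(\mathbb R)$ by Fact~\ref{fct}, item~5, and $|B_b(x,\xi)|$ has at most polynomial growth in $\xi$, the integrand of $I_+(z,x;f)$ is dominated uniformly for $\mathrm{Im}(z)\le 0$ by an integrable function of $\xi$. Dominated convergence then gives $I_+(t,x;f)=\lim_{\varepsilon\to+0}I_+(t-i\varepsilon,x;f)$. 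Likewise $I_-(t,x;f)=\lim_{\varepsilon\to+0}I_-(t+i\varepsilon,x;f)$.

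For $\varepsilon>0$ we apply Fubini, as in Subsection~\ref{4.1}; this is legitimate because of the factor $e^{-\varepsilon\xi}$. Proposition~\ref{half} with $z=t-i\varepsilon$ then gives
\[
I_+(t-i\varepsilon,x;f)=|x|^b\int\frac{1}{2\pi i}\frac{\Phi_b(x,y;t-i\varepsilon)}{x+t-i\varepsilon-y}f(y)|y|^b\,dy .
\]
For the negative frequencies we would not redo the Bessel computation. Instead we use $I_-(z,x;f)=\overline{I_+(\bar z,x;\bar f)}$, which follows from $B_b(x,-\xi)=\overline{B_b(x,\xi)}$ and $\overline{B_b(-\xi,y)}=B_b(\xi,y)$. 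Applying this with $z=t+i\varepsilon$ (so $\bar z=t-i\varepsilon$) gives
\[
I_-(t+i\varepsilon,x;f)=|x|^b\int\frac{-1}{2\pi i}\,\frac{\overline{\Phi_b(x,y;t-i\varepsilon)}}{x+t+i\varepsilon-y}\,f(y)|y|^b\,dy .
\]

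It remains to check the reflection identity $\overline{\Phi_b(x,y;\bar z)}=\Phi_b(x,y;z)$. This holds because $Q_\nu$ is real on $(1,\infty)$ and its analytic continuation to $\mathbb C\setminus(-\infty,1]$ satisfies $Q_\nu(\bar w)=\overline{Q_\nu(w)}$, with $b$ real. Here $w=\frac{x^2+y^2-z^2}{2|x||y|}$ stays off the cut $(-\infty,1]$ when $\mathrm{Im}(z)\neq0$ and $\mathrm{Re}(z)\neq 0$. When $\mathrm{Re}(z)=0$ we have $w\ge1$ real, and the case $t=0$ is trivial anyway. Adding the two contributions yields exactly the stated difference, with the overall factor $\frac{-1}{2\pi i}$: the $+$ sign of $I_+$ becomes the second term with a minus sign. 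Note that the two $\varepsilon$-limits can be combined into a single limit because each limit exists separately.

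The main obstacle is the branch bookkeeping. We must make sure that the choice of $\bigl(\frac{x^2+y^2-z^2}{2}\bigr)^{-b}$ and of $\Psi_b$ in Proposition~\ref{half} corresponds to the principal branch of $Q_\nu$ on $\mathbb C\setminus(-\infty,1]$ for $\mathrm{Im}(z)<0$, so that conjugation acts as claimed. The safest route is to phrase everything through the equivalent form $b\{Q_{b-1}(w)-\mathrm{sgn}(xy)Q_b(w)\}$, where the symmetry is transparent.

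A secondary point is measure-zero exceptional sets, such as $y=0$ or $x=0$; the integrals in $y$ are absolutely convergent for each fixed $\varepsilon>0$, so these sets do not affect the argument.
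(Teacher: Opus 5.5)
Your proposal is correct and follows the same route as the paper. It uses the frequency splitting $e^{tD_b}f=I_++I_-$, Fubini for $\mathrm{Im}(z)<0$ combined with Proposition~\ref{half}, passage to the boundary $z\to t-i0$, and the conjugation symmetry $I_-(z,x;f)=\overline{I_+(\bar z,x;\bar f)}$ to produce the second term. You also make explicit two points the paper leaves tacit: the dominated-convergence justification of the boundary limits, and the reflection identity $\overline{\Phi_b(x,y;\bar z)}=\Phi_b(x,y;z)$, which follows from $Q_\nu(\bar w)=\overline{Q_\nu(w)}$ on $\mathbb{C}\setminus(-\infty,1]$ together with the fact that $z\mapsto w$ maps the open half-planes into that domain.
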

		\begin{proof}
			By Proposition~\ref{half}, we obtain an explicit formula for \(I_{+}(z,x;f)\) in the lower half-plane, where \(I_{+}(z,x;f)\) is the integral defined in Subsection~\ref{4.1}. Passing to the boundary value \(z\to t-i0\) and using
			$I_{-}(t,x;f)=\overline{I_{+}(t,x;\overline{f})}$,
			we obtain the claim.
		\end{proof}
	 	
	 	\vspace{12pt}
	 	
	 	\begin{rmk}[The case when $b=0$]
	 		Using $\Psi_{0}\left(w\right) =1$ together with Theorem~\ref{mainA}, when $b=0$,
	 		\[ e^{tD_{b}}f(x) =f(x+t).\]
	 	\end{rmk}
	 	
	 	\begin{rmk}[Behavior at $x=0$]\label{x=0}
	 		
	 		Suppose $-\frac{1}{2} <b<0$. For $t\neq0$ and $f\in |y|^{b}\mathcal{S}(\mathbb{R})$,  
	 		\begin{align*}
	 			\MoveEqLeft\lim_{x\to +0} |x|^{-b}e^{tD_{b}}f(x)\\
	 			\MoveEqLeft \hspace{12pt}=\frac{\Gamma(\frac{b}{2}+1)\Gamma(\frac{b+1}{2})}{\Gamma(b+\frac{1}{2})} \int_{-\infty}^{\infty}\frac{-1}{2\pi i}\left(\frac{\left(y^2-(t+i0)^{2}\right)^{-b}}{t+i0-y}  -\frac{\left(y^2-(t-i0)^{2}\right)^{-b}}{t-i0-y} \right)f(y)\,|y|^{b}dy & \\
	 			\MoveEqLeft\hspace{12pt}= -\frac{\Gamma(\frac{b}{2}+1)\Gamma(\frac{b+1}{2})}{\Gamma(b+\frac{1}{2})}\frac{\sin(b\pi)}{\pi}\int_{-|t|}^{|t|} \left(t-y\right)^{-b-1}\left(t+y\right)^{-b} f(y)\,|y|^{b}dy 
	 		\end{align*}
	 		
	 		By analytic continuation of $-\frac{\sin(b\pi)}{\pi}\left(t-y\right)^{-b-1}\left(t+y\right)^{-b} |y|^{2b}$ as a distribution, the formula extends to $b >-\frac{1}{2}$.
	 	\end{rmk}
		\vspace{24pt}
		
		Using the identities
		\[ \frac{1}{x+i0}=p.v.\left(\frac{1}{x}\right)-i\pi\delta(x)\]
		\[ \frac{1}{x-i0}=p.v.\left(\frac{1}{x}\right)+i\pi\delta(x)\]
		
		we obtain
		\[\frac{1}{x+t-y+i0} =p.v._{y}\left(\frac{1}{x+t-y}\right)-i\pi\delta\bigl(x+t-y\bigr),\]
		\[\frac{1}{x+t-y-i0} =p.v._{y}\left(\frac{1}{x+t-y}\right)+i\pi\delta\bigl(x+t-y\bigr).\]
		
		\vspace{6pt}
		Hence, formally, Theorem~\ref{mainA} leads to the following expression.
		\begin{align}
			\MoveEqLeft e^{tD_{b}}f(x) \nonumber\\
			\MoveEqLeft = |x|^{b}\int_{-\infty}^{\infty}\delta\bigl(x+t-y\bigr) \frac{1}{2}\Bigl(\Phi_{b}\bigl(x,y;t+i0\bigr)+\Phi_{b}\bigl(x,y;t-i0\bigr)\Bigr)f(y)\,|y|^{b}dy \nonumber\\
			\MoveEqLeft \hspace{2pt}+|x|^{b}\int_{-\infty}^{\infty}p.v._{y}\left(\frac{1}{x+t-y}\right)
			\frac{-1}{2\pi i} \Bigl(\Phi_{b}\bigl(x,y;t+i0\bigr)-\Phi_{b}\bigl(x,y;t-i0\bigr)\Bigr)f(y)\,|y|^{b}dy \label{dis}.
		\end{align}
		
		This will be justified in Proposition~\ref{delpri}.
		\vspace{12pt}
		
		\subsection{Computation of the Contribution from the $\delta$-Term}\label{4.4}
		
		To compute the $\delta$-term in \eqref{dis}, which will be justified in Proposition~\ref{delpri}, we show the following proposition.
		\begin{prp}\label{cos}
			\begin{align*}
				\MoveEqLeft \lim_{y\to x+t}\frac{1}{2}\Bigl(\Phi_{b}\bigl(x,y;t+i0\bigr)+\Phi_{b}\bigl(x,y;t-i0\bigr)\Bigr)= \frac{1}{2}\left\{\Bigl(x(x+t)+i0\Bigr)^{-b} +\Bigl(x(x+t)-i0\Bigr)^{-b}\right\} \\
				& =\begin{cases}
					|x|^{-b}|x+t|^{-b} & \hspace{14pt}\text{if} \hspace{14pt}x(x+t) >0 \\
					|x|^{-b}|x+t|^{-b}\cos(b\pi) & \hspace{14pt}\text{if} \hspace{14pt} x(x+t) <0 \\
				\end{cases}
			\end{align*}
		\end{prp}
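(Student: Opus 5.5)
Our plan is to reduce the statement to the behaviour of $\Psi_b(w)$ as $w\to 1$, namely $\lim_{w\to1}\Psi_b(w)=1$ (Remark~\ref{Psi}, to be proved in Lemma~\ref{del}). We then track the prefactor $\left(\frac{x^2+y^2-z^2}{2}\right)^{-b}$ at $z=t\pm i0$ and $y=x+t$. At $y=x+t$ we have
\[
\frac{x^2+y^2-t^2}{2}=\frac{x^2+(x+t)^2-t^2}{2}=x(x+t),\qquad w=\frac{x^2+y^2-t^2}{2xy}=\frac{x(x+t)}{x(x+t)}=1 .
\]
So the argument of $\Psi_b$ tends to the branch point $w=1$, while the prefactor tends to $\bigl(x(x+t)\bigr)^{-b}$. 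The only question is which branch of the power arises from $z=t\pm i0$.

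\textbf{Step 1: the power factor.} For $z=t\pm i\varepsilon$ we have $z^2=t^2\pm 2it\varepsilon+O(\varepsilon^2)$. Hence
\[
\frac{x^2+y^2-z^2}{2}=\frac{x^2+y^2-t^2}{2}\mp it\varepsilon+O(\varepsilon^2).
\]
The imaginary part has sign $\mp\mathrm{sgn}(t)$. Here we must take care: the combination inside Theorem~\ref{mainA} comes from $I_+$ at $t-i0$ and from $I_-=\overline{I_+}$ at $t+i0$, so the boundary values are complex conjugates of each other. Whatever the sign convention, the two terms $\Phi_b(x,y;t\pm i0)$ therefore give the two branches $\bigl(x(x+t)\pm i0\bigr)^{-b}$. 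Their average is symmetric, so the sign of $t$ does not matter for the sum. When $x(x+t)>0$ both branches equal $|x|^{-b}|x+t|^{-b}$. When $x(x+t)<0$ they equal $|x(x+t)|^{-b}e^{\mp i\pi b}$, and their average is $|x|^{-b}|x+t|^{-b}\cos(b\pi)$.

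\textbf{Step 2: the $\Psi_b$ factor.} We need $\Psi_b(w(y,t\pm i0))\to 1$ as $y\to x+t$, from either side of the cut. The point is that $\Psi_b$ is analytic on $|w|>1$ (Remark~\ref{Psi}, item 2), but near $w=1$ the functions $Q_{b-1}$ and $Q_b$ each have a logarithmic singularity. The natural approach is to use the connection formula
\[
Q_\nu(w)=\tfrac12 P_\nu(w)\log\frac{w+1}{w-1}+(\text{analytic near }w=1),\qquad P_\nu(1)=1 .
\]
Then $Q_{b-1}-Q_b$ has logarithmic part $\tfrac12\bigl(P_{b-1}-P_b\bigr)\log\frac{w+1}{w-1}$, which is $O\bigl((w-1)\log(w-1)\bigr)\to0$. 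It remains to compute the finite part. Using the standard expansion $Q_\nu(w)\sim \tfrac12\log\frac{2}{w-1}-\gamma-\psi(\nu+1)$, we get
\[
\lim_{w\to1}\bigl(Q_{b-1}(w)-Q_b(w)\bigr)=\psi(b+1)-\psi(b)=\frac1b ,
\]
and hence $\Psi_b\to b\cdot 1\cdot\frac1b=1$. This holds from either side of the cut, since the discontinuity is carried only by the vanishing logarithmic term.

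\textbf{Main obstacle.} Step 2 is the main obstacle, i.e.\ Lemma~\ref{del}. It requires the precise constant term of $Q_\nu$ at $w=1$, or alternatively a different route. One alternative is to use the recurrences in the proof of Lemma~\ref{diff}: $(w^2-1)\frac{d}{dw}(Q_{b-1}-Q_b)=-b(w+1)(Q_{b-1}-Q_b)$, which shows that $(w+1)^{b}(Q_{b-1}-Q_b)\cdot w^{\ldots}$ satisfies a first-order ODE and integrates explicitly from $w=\infty$ using item 3 of Remark~\ref{Psi}. We would try this second route first, as it avoids digamma constants.

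A separate issue is uniformity of the joint limit $\varepsilon\to0$, $y\to x+t$. Since $w\to1$ along any approach and the limit of $\Psi_b$ there is independent of direction, continuity suffices.
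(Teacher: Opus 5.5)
Your Step~1 combined with the first route of Step~2 is correct and is essentially the paper's argument. The paper likewise uses $x^2+(x+t)^2-t^2=2x(x+t)$ to reduce to $\lim_{w\to1}\Psi_b(w)=1$ (Lemma~\ref{del}). It obtains that limit from the logarithm-plus-digamma expansion of $Q_\nu$ at $w=1$, which it derives from the $\gamma=\alpha+\beta$ connection formula for ${}_2F_1$. The paper's constant term and yours agree by the duplication formula for $\psi$, and both final computations reduce to $\psi(z+1)-\psi(z)=1/z$. Your explanation of which boundary value gives which branch of the power factor, and why the average makes the sign convention irrelevant, is more explicit than the paper, which simply asserts it.

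The ``second route'' you say you would try first, however, rests on a miscomputed identity. Subtracting the two recurrences from the proof of Lemma~\ref{diff} gives
\[
(w^2-1)\frac{d}{dw}\bigl(Q_{b-1}-Q_b\bigr)=-b\,(w-1)\bigl(Q_{b-1}+Q_b\bigr).
\]
So the derivative of the difference is coupled to the sum, and there is no closed first-order ODE for $Q_{b-1}-Q_b$. Your version would force $Q_{b-1}-Q_b=C(w-1)^{-b}$, i.e.\ $\Psi_b(w)=bC\,w^b(w-1)^{-b}$. That blows up at $w=1$ for $b>0$, which is the opposite of what you want to prove; compare the explicit $\Psi_1(w)=w+\frac{w(1-w)}{2}\log\frac{w+1}{w-1}$.

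The idea can be repaired by adding the recurrences instead. This gives $(w-1)\frac{d}{dw}(Q_{b-1}+Q_b)=-b\,(Q_{b-1}-Q_b)$, hence
\[
\Psi_b(w)=-w^{b}\,(w-1)\,\frac{d}{dw}\bigl(Q_{b-1}(w)+Q_b(w)\bigr).
\]
Since $Q_\nu(w)=-\frac12P_\nu(w)\log(w-1)+(\text{analytic near }w=1)$ with $P_\nu(1)=1$, one gets $(w-1)Q_\nu'(w)\to-\frac12$ from any direction and on any branch, so $\Psi_b(w)\to1$. This uses only the coefficient of the logarithm, equivalently the jump $-\pi i P_\nu$ used in the proof of Theorem~\ref{eva}, and no digamma constants. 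It is therefore a genuinely lighter alternative to the paper's computation, but only with the corrected identity.
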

			
			\begin{proof}
				We recall that
				
				\[\Phi_{b}(x,y;z)= \left(\frac{x^2+y^2-z^2}{2}\right)^{-b}\Psi_{b}\left(\frac{x^2+y^2-z^2}{2xy}\right) .\]
				
				Since  
				$x^2+(x+t)^2-t^2= 2x(x+t)$,
				it suffices to show the following Lemma.
				
				\begin{lmm}\label{del}
					\[\lim_{w\to 1}\Psi_{b}\bigl(w\bigr)=1\]
				\end{lmm}
				
			\begin{proof}
				We recall that 
				\[\Psi_{b}\left(w\right)= bw^{b}\Bigl( Q_{b-1}\left(w\right)-Q_{b}\left(w\right) \Bigr) 
				\]
				
				\[Q_{\nu}(w)=\frac{\Gamma\left(\frac{\nu+1}{2}\right)\Gamma\left(\frac{\nu+2}{2}\right)}{2\Gamma(\nu+3/2)}w^{-(\nu+1)} {}_{2}F_{1}\left(\begin{matrix}
					\frac{\nu+1}{2}, \frac{\nu+2}{2} \\ \nu+\frac{3}{2} \\
				\end{matrix};\frac{1}{w^2}\right). \]

				\vspace{12pt}
				We use the following connection formula for the Gauss hypergeometric function in the case $\gamma=\alpha+\beta$; a proof is given in Appendix~\ref{App}:
				 
				\begin{align*}
					\MoveEqLeft {}_{2}F_{1}\left(\begin{matrix}
						\alpha, \beta \\ \gamma \\
					\end{matrix}; w\right) = \frac{\Gamma(\gamma)}{\Gamma(\alpha)\Gamma(\beta)}\left\{-\log\left(1-w\right) {}_{2}F_{1}\left(\begin{matrix}
						\alpha, \beta \\ 1 \\
					\end{matrix};1-w\right) \right.&\\
					&\left.- \sum_{m=0}^{\infty} \frac{(\alpha)_{m}(\beta)_{m}}{m!} \bigl(\psi(\alpha+m)+\psi(\beta+m)-2\psi(1+m) \bigr)\frac{(1-w)^{m}}{m!}\right\}.
				\end{align*}
				Here $\psi(w):=\frac{\Gamma'(w)}{\Gamma(w)}$ is the digamma function.
				
				Hence, 
				\begin{align*}
					\MoveEqLeft \lim_{z\to 1} \Psi_{b}\left(z\right) & \\
					\MoveEqLeft = -\frac{b}{2}\left(\psi\left(\frac{b+1}{2}\right)+\psi\left(\frac{b}{2}\right)-2\psi(1) \right) +
					\frac{b}{2}\left(\psi\left(\frac{b+2}{2}\right)+\psi\left(\frac{b+1}{2}\right)-2\psi(1)\right) \\
					\MoveEqLeft  = \frac{b}{2}\left(\psi\left(\frac{b}{2}+1\right)-\psi\left(\frac{b}{2}\right) \right) = 1.
				\end{align*}

				This shows the claim.
				
			\end{proof}
			
	\end{proof}

		\subsection{Real-Variable Formula}\label{4.5}
			In this subsection, we prove Proposition~\ref{delpri}, thereby justifying formula~\eqref{dis}, and then deduce Main Theorem~\hyperref[mtB]{B}.
			
			\vspace{12pt}
			First, we prepare the following Lemma.
			\begin{lmm}\label{pvl}
				Let $\{G_\varepsilon^\pm\}_{\varepsilon >0}$ be measurable functions on $\mathbb R$ satisfying the following conditions.
				
				\begin{enumerate}
					\item There exist measurable functions $G_0^\pm$ such that
					\[
					\lim_{\varepsilon\to+0}G_\varepsilon^\pm(x)=G_0^\pm(x)
					\qquad \text{for a.e. }x\in\mathbb R,
					\]
					\item 
					\[
					\lim_{\varepsilon\to+0}G_\varepsilon^\pm(0)=G_0^\pm(0)
					\]
					and $G_0^\pm(0)$ is finite.
					\item There exists $\varepsilon_{0},\delta >0$, $h_{1}\in L^1(-\delta,\delta)$ and $h_{2}\in L^1\bigl(\mathbb{R}\backslash(-\delta,\delta)\bigr)$such that
					\[
					\left|\frac{G_\varepsilon^\pm(x)-G_\varepsilon^\pm(0)}{x\pm i\varepsilon}\right| \le h_{1}(x) \qquad (0 < \varepsilon < \varepsilon_{0},\quad x\in(-\delta,\delta)).
					\]
					\[
					\left|\frac{G_\varepsilon^\pm(x)}{x\pm i\varepsilon}\right| \le h_{2}(x) \qquad (0 < \varepsilon < \varepsilon_{0}, \quad x\in \mathbb{R}\backslash(-\delta,\delta)).
					\]
				\end{enumerate}
				
				Then
				\begin{align*}
					\lim_{\varepsilon\to+0}
					\int_{-\infty}^{\infty}
					\frac{-1}{2\pi i}\left(
					\frac{G_\varepsilon^+(x)}{x+i\varepsilon}
					-
					\frac{G_\varepsilon^-(x)}{x-i\varepsilon}
					\right)\,dx =
					\frac{G_0^{+}(0)+G_{0}^{-}(0)}{2}
					+
					\frac{-1}{2\pi i} \,p.v.\int_{-\infty}^{\infty}\,
					\,\!\frac{G_0^+(x)-G_0^-(x)}{x}
					\,dx.
				\end{align*}
			\end{lmm}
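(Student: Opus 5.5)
The plan is to treat the two terms separately and, for each sign, split the integrand at the singular point $x=0$. Write
\[
\frac{G_\varepsilon^\pm(x)}{x\pm i\varepsilon}
=\frac{G_\varepsilon^\pm(x)-G_\varepsilon^\pm(0)\chi_{(-\delta,\delta)}(x)}{x\pm i\varepsilon}
+G_\varepsilon^\pm(0)\,\frac{\chi_{(-\delta,\delta)}(x)}{x\pm i\varepsilon}.
\]
For the first piece, condition~3 supplies the $\varepsilon$-independent majorants $h_1$ on $(-\delta,\delta)$ and $h_2$ off it. Condition~1 gives pointwise a.e.\ convergence to
\[
\frac{G_0^\pm(x)-G_0^\pm(0)\chi_{(-\delta,\delta)}(x)}{x},
\]
where we use condition~2 for the value at $0$. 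Dominated convergence then yields
\[
\int_{-\infty}^{\infty}\frac{G_0^\pm(x)-G_0^\pm(0)\chi_{(-\delta,\delta)}(x)}{x}\,dx .
\]
This limit is absolutely convergent. The same majorants, passed to the limit, show that $(G_0^\pm(x)-G_0^\pm(0))/x\in L^1(-\delta,\delta)$ and $G_0^\pm(x)/x\in L^1$ away from $(-\delta,\delta)$.

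For the second piece I would compute explicitly. We have
\[
\int_{-\delta}^{\delta}\frac{dx}{x\pm i\varepsilon}
=\log\frac{\delta\pm i\varepsilon}{-\delta\pm i\varepsilon}
\longrightarrow \mp i\pi
\]
as $\varepsilon\to+0$: the real parts cancel by symmetry and the imaginary part is $\mp 2\arctan(\delta/\varepsilon)$. Combining this with condition~2, the second piece tends to $\mp i\pi\,G_0^\pm(0)$.

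It remains to assemble the pieces with the prefactor $\frac{-1}{2\pi i}$ and the minus sign between the two terms. The singular contributions give
\[
\frac{-1}{2\pi i}\bigl(-i\pi G_0^+(0)-i\pi G_0^-(0)\bigr)=\frac{G_0^+(0)+G_0^-(0)}{2}.
\]
The regular contributions give
\[
\frac{-1}{2\pi i}\int_{-\infty}^{\infty}\frac{\bigl(G_0^+-G_0^-\bigr)(x)-\bigl(G_0^+(0)-G_0^-(0)\bigr)\chi_{(-\delta,\delta)}(x)}{x}\,dx .
\]
This integral equals $p.v.\int_{-\infty}^{\infty}\frac{G_0^+(x)-G_0^-(x)}{x}\,dx$, because $\int_{\eta<|x|<\delta}\frac{dx}{x}=0$ by oddness and the integrability established above lets $\eta\to0$. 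That gives the stated formula.

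The only delicate point is the bookkeeping near $0$. Condition~3 controls $G_\varepsilon^\pm(x)-G_\varepsilon^\pm(0)$ and not $G_\varepsilon^\pm(x)-G_0^\pm(0)$, so the subtraction must be made with the $\varepsilon$-dependent constant $G_\varepsilon^\pm(0)$. Condition~2 is then used exactly to pass to the limit in that constant, both in the explicit logarithm term and in the pointwise limit of the regular piece. Everything else is dominated convergence.
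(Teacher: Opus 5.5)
Your proposal is correct and is essentially the paper's proof: subtract the $\varepsilon$-dependent constant $G_\varepsilon^\pm(0)$ on $(-\delta,\delta)$, apply dominated convergence with $h_1,h_2$, and use $\int_{-\delta}^{\delta}dx/(x\pm i\varepsilon)\to\mp i\pi$ for the constant part. The only differences are presentational: you merge the paper's second and third terms into a single dominated-convergence step, and you spell out the oddness argument that identifies the limit with the principal value, which the paper leaves implicit.
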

			
			\vspace{12pt}
			\begin{proof}
				We first decompose as
				\begin{align*}
					&\lim_{\varepsilon\to+0}\int_{-\infty}^{\infty}\frac{-1}{2\pi i}
					\left(
					\frac{G_\varepsilon^+(x)}{x+i\varepsilon}
					-
					\frac{G_\varepsilon^-(x)}{x-i\varepsilon}
					\right)\,dx \\
					&\qquad=
					\lim_{\varepsilon\to+0}\int_{|x| < \delta}\frac{-1}{2\pi i}
					\left(
					\frac{G_\varepsilon^+(0)}{x+i\varepsilon}
					-
					\frac{G_\varepsilon^-(0)}{x-i\varepsilon}
					\right)\,dx \\
					&\qquad+
					\lim_{\varepsilon\to+0}\int_{|x| < \delta}\frac{-1}{2\pi i}
					\left(
					\frac{G_\varepsilon^+(x)-G_\varepsilon^+(0)}{x+i\varepsilon}
					-
					\frac{G_\varepsilon^-(x)-G_\varepsilon^-(0)}{x-i\varepsilon}
					\right)\,dx \\
					&\qquad+\lim_{\varepsilon\to+0}\int_{|x| > \delta}\frac{-1}{2\pi i}
					\left(
					\frac{G_\varepsilon^+(x)}{x+i\varepsilon}
					-
					\frac{G_\varepsilon^-(x)}{x-i\varepsilon}
					\right)\,dx 
				\end{align*}
				
				We evaluate each term. For the first term, 
				\begin{align*}
					\MoveEqLeft \lim_{\varepsilon\to+0}\int_{|x|<\delta}\frac{-1}{2\pi i}
					\left(
					\frac{G_\varepsilon^+(0)}{x+i\varepsilon}
					-
					\frac{G_\varepsilon^-(0)}{x-i\varepsilon}
					\right)\,dx = \frac{G_0^+(0)+G_0^-(0)}{2}
				\end{align*}
				
				For the second term, by Lebesgue's dominated convergence theorem,
				\begin{align*}
					\MoveEqLeft\lim_{\varepsilon\to+0}\int_{|x|<\delta}\frac{-1}{2\pi i}
					\left(
					\frac{G_\varepsilon^+(x)-G_\varepsilon^+(0)}{x+i\varepsilon}
					-
					\frac{G_\varepsilon^-(x)-G_\varepsilon^-(0)}{x-i\varepsilon}
					\right)\,dx\\
					& = \int_{|x|<\delta} \lim_{\varepsilon\to+0}\frac{-1}{2\pi i} 
					\left(
					\frac{G_\varepsilon^+(x)-G_\varepsilon^+(0)}{x+i\varepsilon}
					-
					\frac{G_\varepsilon^-(x)-G_\varepsilon^-(0)}{x-i\varepsilon}
					\right)\,dx\\
					& = \int_{|x|<\delta} \frac{-1}{2\pi i} 
					\left(
					\frac{G_0^+(x)-G_0^+(0)}{x}
					-
					\frac{G_0^-(x)-G_0^-(0)}{x}
					\right)\,dx\\
					& = \frac{-1}{2\pi i} \, p.v.\,\int_{|x|<\delta} \left(\frac{G_0^+(x)-G_0^-(x)}{x}	\right) 
					dx.
				\end{align*}
				
				For the third term, again by Lebesgue's dominated convergence theorem,
				\begin{align*}
					\MoveEqLeft\lim_{\varepsilon\to+0}\int_{|x|>\delta}\frac{-1}{2\pi i}
					\left(
					\frac{G_\varepsilon^+(x)}{x+i\varepsilon}
					-
					\frac{G_\varepsilon^-(x)}{x-i\varepsilon}
					\right)\,dx\\
					& = \frac{-1}{2\pi i}\int_{|x|>\delta}
					\frac{G_0^+(x)-G_0^-(x)}{x}
					\,dx
				\end{align*}
				
				By summing them, the claim follows.
			\end{proof}
		
		\vspace{24pt}
		
		We now justify the decomposition \eqref{dis} into the $\delta$-term and the principal value term.
		
		\begin{prp}\label{delpri} Suppose $b>-\frac{1}{2}$, $x(x+t)\neq 0$, and $f\in|y|^{b}\mathcal{S}(\mathbb{R})$,
			\begin{align*}
				\MoveEqLeft e^{tD_{b}}f(x) &\\
				\MoveEqLeft = |x|^{b}\int_{-\infty}^{\infty}\delta\bigl(x+t-y\bigr) \frac{1}{2}\Bigl(\Phi_{b}\bigl(x,y;t+i0\bigr)+\Phi_{b}\bigl(x,y;t-i0\bigr)\Bigr)f(y)\,|y|^{b}dy \\
				\MoveEqLeft \hspace{2pt}+|x|^{b}\int_{-\infty}^{\infty}p.v._{y}\left(\frac{1}{x+t-y}\right)
				\frac{-1}{2\pi i} \Bigl(\Phi_{b}\bigl(x,y;t+i0\bigr)-\Phi_{b}\bigl(x,y;t-i0\bigr)\Bigr)f(y)\,|y|^{b}dy .
			\end{align*}
		\end{prp}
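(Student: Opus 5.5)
We start from Theorem~\ref{mainA} and apply Lemma~\ref{pvl} after the change of variable $s=x+t-y$, so that the singular point $y=x+t$ becomes $s=0$. Define
\[
G_\varepsilon^{\pm}(s):=|x|^{b}\,\Phi_b\bigl(x,\,x+t-s;\,t\pm i\varepsilon\bigr)\,f(x+t-s)\,|x+t-s|^{b}.
\]
With these functions, the integrand of Theorem~\ref{mainA} is exactly $\frac{-1}{2\pi i}\bigl(G_\varepsilon^+(s)/(s+i\varepsilon)-G_\varepsilon^-(s)/(s-i\varepsilon)\bigr)$. The conclusion of Lemma~\ref{pvl} is then precisely the claimed identity. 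The term $\frac{1}{2}(G_0^+(0)+G_0^-(0))$ is the $\delta$-term, and the p.v.\ term is the second integral. So the proof reduces to checking hypotheses 1--3 of Lemma~\ref{pvl} for this choice of $G^\pm_\varepsilon$.

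\textbf{Hypotheses 1 and 2.} By the second expression for $|x|^b|y|^b\Phi_b$ (the one in terms of $Q_{b-1},Q_b$), $G_\varepsilon^\pm(s)$ equals $b\{Q_{b-1}(w_\varepsilon)-\mathrm{sgn}(xy)Q_b(w_\varepsilon)\}f(y)$ with $y=x+t-s$ and
\[
w_\varepsilon=\frac{x^2+y^2-(t\pm i\varepsilon)^2}{2|x||y|}.
\]
As $\varepsilon\to 0$ we have $w_\varepsilon\to w_0$ from the upper or lower half-plane, depending on the signs of $\pm$ and $t$. $Q_\nu$ is holomorphic on $\mathbb{C}\setminus(-\infty,1]$ and has boundary values from both sides on $(-\infty,1)$, with at most logarithmic singularities at $w=\pm1$ and $w=0$ behaving like $|y|^{-b}$-type powers. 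Hence the pointwise limits exist except on the finite set $\{y: w_0=\pm1\}\cup\{y=0\}$, which has measure zero. At $s=0$ we have $w_0=1$ regardless of $\varepsilon$-direction. There it is better to use the first form $\Phi_b=(\cdot)^{-b}\Psi_b(\cdot)$. Since $x(x+t)\neq0$, the prefactor $\bigl((x^2+y^2-z^2)/2\bigr)^{-b}$ is continuous near $(y,z)=(x+t,t)$ off the cut, and Lemma~\ref{del} gives $\Psi_b\to1$. So $G_\varepsilon^\pm(0)\to G_0^\pm(0)$, and this limit is finite, as Proposition~\ref{cos} computes.

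\textbf{Hypothesis 3 (the main obstacle).} We need dominating functions uniform in small $\varepsilon$.

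Away from $s=0$ (region $|s|>\delta$), we use the following facts:
\begin{enumerate}
\item $f\in|y|^b\mathcal S$ decays rapidly.
\item $Q_\nu(w)=O(|w|^{-\nu-1})$ at infinity, so $G^\pm_\varepsilon$ grows at most polynomially in $y$.
\item Near $w=\pm1$ the functions $Q_\nu$ have only logarithmic singularities, which are integrable uniformly in $\varepsilon$ since $|\log(w_\varepsilon\mp1)|\le|\log|w_0\mp1||+C$.
\item Near $y=0$, $Q_\nu(w)\sim w^{-\nu-1}$ combined with $f(y)|y|^b\sim|y|^{2b}$ gives $|G|\lesssim|y|^{2b}\cdot|y|^{\min(b,b+1)}\ldots$. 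It is cleaner to bound $|x|^b|y|^b\Phi_b$ directly by $C|y|^{\min(0,2b)}$ near $y=0$, which is integrable since $b>-\frac12$.
\end{enumerate}

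Near $s=0$ (region $|s|<\delta$, with $\delta$ small so that $y$ stays away from $0$ and $w$ stays near $1$), we write $w_\varepsilon-1=\bigl((x-y)^2-(t\pm i\varepsilon)^2\bigr)/(2|x||y|)$. The point is that the function $\Phi_b$ is built from the combination $\Psi_b$, in which the logarithmic terms cancel at $w=1$. This can be seen from the connection formula in the proof of Lemma~\ref{del}: the log coefficients of $Q_{b-1}$ and $Q_b$ match to leading order, leaving a remainder of size $O(|w-1|\,|\log(w-1)|)$. Consequently $|G_\varepsilon^\pm(s)-G_\varepsilon^\pm(0)|\le C\bigl(|s|+|w_\varepsilon-w_\varepsilon|_{s=0}|\,|\log|\cdot||\bigr)\lesssim |s|\,(1+|\log|s||)$, uniformly in $\varepsilon$. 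Since $|s\pm i\varepsilon|\ge|s|$, this yields $h_1(s)=C(1+|\log|s||)\in L^1(-\delta,\delta)$.

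I expect this uniform log-Lipschitz estimate of $\Psi_b$ near $w=1$, taken along complex approach directions, to be the delicate step. If the connection formula gives it directly, the rest is routine dominated convergence inside Lemma~\ref{pvl}.
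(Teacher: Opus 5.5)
Your strategy is the paper's: move $y=x+t$ to the origin, apply Lemma~\ref{pvl} to the integrand of Theorem~\ref{mainA}, and verify the hypotheses using the at most logarithmic singularities of $Q_\nu$ at $y=\pm x\pm t$ and the cancellation of the logarithm in $\Psi_b$ at $W=1$. Your substitution $s=x+t-y$ fits the sign convention of Lemma~\ref{pvl} directly, and your log-Lipschitz bound $h_1(s)=C(1+|\log|s||)$ is more explicit than the paper's argument.

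There is, however, a configuration your domination argument away from $s=0$ does not cover: $x=t$. It is allowed by $x(x+t)\neq0$, and the paper treats it separately. There the singular points $y=x-t$ and $y=-x+t$ both coincide with $y=0$, and
\[
w_0=\frac{|y|}{2|x|}\longrightarrow 0,\qquad w_\varepsilon=\frac{y^2+\varepsilon^2\mp 2ix\varepsilon}{2|x||y|}\qquad (y\to0).
\]
Your item~4 needs $|w_\varepsilon|$ comparable to $1/|y|$ uniformly in $\varepsilon$. That holds only because the numerator tends to $x^2-t^2\neq0$. Item~3 concerns $w_0$ near $\pm1$, with $w_\varepsilon$ a small perturbation of $w_0$. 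Neither applies here. The limit $w_0$ is near $0$, and the perturbation $w_\varepsilon-w_0=(\varepsilon^2\mp 2ix\varepsilon)/(2|x||y|)$ is not small. So $|w_\varepsilon|$ ranges from order $|y|$ to order $\varepsilon/|y|$, depending on the ratio $\varepsilon/|y|$. For instance, $|\log(w_\varepsilon\mp1)|\le|\log|w_0\mp1||+C$ fails for fixed $\varepsilon$ as $y\to0$. Hence the uniform $L^1$ bound near $y=0$ is not established in this case.

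The repair is short. Since $\mathrm{Re}\,w_\varepsilon=(y^2+\varepsilon^2)/(2|x||y|)\ge0$ and $|\mathrm{Im}\,w_\varepsilon|=\varepsilon/|y|$, for $|y|<\frac45|x|$ the point $w_\varepsilon$ stays at distance at least $\frac12$ from $\pm1$. Also $|w_\varepsilon|\le C/|y|$ for $|y|,\varepsilon<\delta$. On such a set, inside one closed half-plane, $|Q_b(w)|\le C$ and $|Q_{b-1}(w)|\le C(1+|w|)^{\max(0,-b)}$. Hence $|G^\pm_\varepsilon(s)|\le C|y|^{\min(0,b)}|f(y)|\le C|y|^{\min(0,2b)}$ uniformly in $\varepsilon$, while $s=2x-y$ stays away from $0$.

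You should also fix the bookkeeping in item~4. From $G^\pm_\varepsilon=b\{Q_{b-1}(w_\varepsilon)-\mathrm{sgn}(xy)Q_b(w_\varepsilon)\}f(y)$ one gets, for $x\neq t$, $|Q_{b-1}(w_\varepsilon)|\le C|y|^{b}$ and $|f(y)|\le C|y|^{b}$, so $|G^\pm_\varepsilon|\le C|y|^{2b}$. The bound $C|y|^{\min(0,2b)}$ must be asserted for $G^\pm_\varepsilon$, not for the kernel $|x|^b|y|^b\Phi_b$ alone. Multiplying the kernel bound by $f$ gives $|y|^{3b}$, which is not integrable for $b\le-\frac13$.

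Finally, when $x(x+t)<0$, your $w_\varepsilon$ (normalized by $|x||y|$) tends to $-1$ at $s=0$, not $1$. The expansion near $s=0$ should therefore be written, as you effectively do, for the argument $W=\frac{x^2+y^2-z^2}{2xy}$ of $\Psi_b$, for which $W-1=\frac{(x-y)^2-z^2}{2xy}$.
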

		\begin{proof}
			We apply Lemma~\ref{pvl} after the change of variable $u:=y-(x+t)$.
			Namely, we set
			\[
			G_\varepsilon^\pm(u)
			:=
			|x|^b\,\Phi_b\bigl(x,u+x+t\,;t\pm i\varepsilon\bigr)\,
			f(u+x+t)\,|u+x+t|^b .
			\]
			Then Theorem~\ref{mainA} is rewritten in the form required in Lemma~\ref{pvl}.
			
			It remains to verify the assumptions of Lemma~\ref{pvl}. We recall that
			\[|x|^{b}|y|^{b}\Phi_b(x,y;z) = b\left\{Q_{b-1}\left(\frac{x^2+y^2-z^2}{2|x||y|}\right)-\mathrm{sgn}(xy)Q_{b}\left(\frac{x^2+y^2-z^2}{2|x||y|}\right)\right\}.\]
			We first assume $x-t\neq 0$. The possible singularities of
			$|x|^{b}|y|^{b}\Phi_b(x,y;z)$ come from the Legendre function $Q_\nu$ which occur when
			$\frac{x^2+y^2-z^2}{2xy}=\pm1 \Leftrightarrow y=\pm x\pm t$. As in the proof of Lemma~\ref{del} and by the connection formula in Appendix~\ref{App}, these are at most logarithmic. In particular, at the point $y=x+t$ the leading logarithmic singularity cancels, and hence
			\[
			\frac{G_\varepsilon^\pm(u)-G_\varepsilon^\pm(0)}{u\pm i\varepsilon}
			\]
			is locally dominated by an $L^1$-function near $u=0$.
			
			At the other points $y=\pm x\pm t$ with $y\neq x+t$, the denominator $x+t-y$ does not vanish, so the corresponding singularities are harmless for Lemma~\ref{pvl}; they are still locally $L^1$ because they are at most logarithmic. Finally, the behavior as $y\to\pm\infty$ is controlled by the asymptotics of $\Psi_b$; see Remark~\ref{Psi}. The pointwise convergence assumptions in Lemma~\ref{pvl} are also verified by the above argument. Therefore the hypotheses of Lemma~\ref{pvl} are satisfied in the case $x-t\neq0$.
			
			\vspace{12pt}
			We next assume $x-t=0$. Then the points $y=x-t$ and $y=-x+t$ both collapse to
			$y=0$. Thus, we check this point. The other arguments are the same as in the case $x-t\neq0$. For $z=x\pm i\varepsilon$,
			set
			$W_\varepsilon(y):=\frac{x^2+y^2-z^2}{2xy}
			=
			\frac{y^2+\varepsilon^2\mp 2ix\varepsilon}{2xy}$.
			There exists a sufficiently small $\delta>0$ such that for $|y|<\delta$ and $0<\varepsilon<\delta$, 
			$ |1-W_\varepsilon(y)|\ge \frac12,
			\quad
			|-1-W_\varepsilon(y)|\ge \frac12$.
			In particular, $W_\varepsilon(y)$ stays uniformly away from both $1$ and $-1$. Hence $\Psi_{b}(W_\varepsilon(y))$ is uniformly bounded in this region. This gives the required local $L^{1}$ bound near $y=0$ for the integrand $\frac{1}{x+z-y}|x|^{b}|y|^{b}\Phi_{b}(x,y;z)f(y)= \frac{|x|^{b}|y|^{b}}{x+z-y} (\frac{x^2+y^2-z^2}{2})^{-b}\Psi_{b}(\frac{x^2+y^2-z^2}{2xy})f(y)$. Therefore the hypotheses of Lemma~\ref{pvl} are satisfied also in the case $x-t=0$, and the claim follows.
		\end{proof}
		
		\vspace{12pt}
		
		We set 
		\[K_{b}(x,y;t) := 
		\frac{-1}{2\pi i} |x|^{b}|y|^{b}\lim_{\varepsilon\rightarrow +0}\Bigl(\Phi_{b}\bigl(x,y;t+i\varepsilon\bigr)-\Phi_{b}\bigl(x,y;t-i\varepsilon\bigr)\Bigr),  \]
		
		Since $\Phi_{b}\bigl(x,y; z\bigr)$ is single-valued and holomorphic on $\Bigl\{z\in\mathbb{C}\,\Bigl|\Bigr.\, |z|<\bigl||x|-|y|\bigr| \Bigr\}$ with respect to $z$, \\ 
		$K_{b}(x,y;t)=0$ on $\Bigl\{t\in\mathbb{R}\,\Bigl|\Bigr.\, |t|<\bigl||x|-|y|\bigr| \Bigr\}$.
		
		\vspace{24pt}
		By Propositions~\ref{cos} and \ref{delpri} together with the above argument for support of $K_{b}(x,y;t)$, we obtain Main Theorem~\hyperref[mtB]{B}:
		
		\begin{thm}[Real-Variable Formula]\label{mainB}
			For $b > -\frac{1}{2}$, $x(x+t)\neq 0$ and $f\in |y|^{b}\mathcal{S}(\mathbb{R})$,
			\begin{align*}
				\MoveEqLeft e^{tD_{b}}f(x) &\\
				\MoveEqLeft=\begin{dcases}
					f(x+t)+ \int_{\left||x|-|y|\right|<|t|} p.v._{y}\left(\frac{1}{x+t-y}\right)K_{b}(x,y;t)f(y)\,dy & \text{if}\hspace{14pt} x(x+t)>0,\\
					f(x+t)\cos(b\pi) + \int_{\left||x|-|y|\right|<|t|} p.v._{y}\left(\frac{1}{x+t-y}\right) K_{b}(x,y;t)f(y)\,dy & \text{if}\hspace{14pt} x(x+t) < 0.\\
				\end{dcases}
			\end{align*}
		\end{thm}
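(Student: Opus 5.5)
Theorem~\ref{mainB} is assembled from three ingredients already in place: Proposition~\ref{delpri} splits $e^{tD_b}f(x)$ into a $\delta$-term and a principal value term; Proposition~\ref{cos} evaluates the $\delta$-term; and the holomorphy remark locates the support of $K_b$. I take $b>-\tfrac12$, $x(x+t)\neq0$ and $f\in|y|^b\mathcal S(\mathbb R)$ as in the statement.

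\textbf{Step 1: the $\delta$-term.} By Proposition~\ref{delpri}, this term equals
\[
|x|^{b}\,|x+t|^{b}\,f(x+t)\cdot\lim_{y\to x+t}\tfrac12\Bigl(\Phi_b(x,y;t+i0)+\Phi_b(x,y;t-i0)\Bigr).
\]
Lemma~\ref{pvl} produces exactly the value $\frac{G_0^+(0)+G_0^-(0)}{2}$, i.e.\ the boundary values at $u=0$, which is the point $y=x+t$. The hypothesis $x(x+t)\neq 0$ guarantees $x+t\neq 0$, so the factor $|x+t|^b$ is harmless there.

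Proposition~\ref{cos} gives this limit as $|x|^{-b}|x+t|^{-b}$ when $x(x+t)>0$, and as $|x|^{-b}|x+t|^{-b}\cos(b\pi)$ when $x(x+t)<0$. The powers of $|x|$ and $|x+t|$ cancel, so the $\delta$-term is $f(x+t)$ or $f(x+t)\cos(b\pi)$ respectively.

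\textbf{Step 2: the principal value term.} The principal value term in Proposition~\ref{delpri} is
\[
|x|^b\int p.v._y\Bigl(\tfrac{1}{x+t-y}\Bigr)\frac{-1}{2\pi i}\bigl(\Phi_b(t+i0)-\Phi_b(t-i0)\bigr)f(y)\,|y|^b\,dy.
\]
By the definition of $K_b$, this is precisely $\int p.v._y\bigl(\tfrac{1}{x+t-y}\bigr)K_b(x,y;t)f(y)\,dy$. The boundary values $t\pm i0$ here are the a.e.\ limits $G_0^\pm$ furnished by Lemma~\ref{pvl}, so they coincide with the limit in the definition of $K_b$.

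\textbf{Step 3: restricting the domain of integration.} We showed that $K_b(x,y;t)=0$ whenever $|t|<\bigl||x|-|y|\bigr|$, by holomorphy of $\Phi_b$ in $z$ on the disc $|z|<\bigl||x|-|y|\bigr|$. Hence the integral may be restricted to $\bigl||x|-|y|\bigr|\le|t|$.

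The boundary set $\bigl||x|-|y|\bigr|=|t|$ consists of at most four points, so it has measure zero. One caveat is the point $y=x+t$ itself: it lies on this boundary when $x(x+t)>0$. There the principal value is still meaningful, because the region of integration restricted to a neighborhood of $x+t$ is one-sided. The domination established in the proof of Proposition~\ref{delpri} makes the integral absolutely convergent near that point: the logarithmic cancellation, combined with $K_b$ vanishing on one side, leaves an integrand that is $O(\log)$ rather than $1/(y-x-t)$. So replacing $\int_{\mathbb R}$ by $\int_{||x|-|y||<|t|}$ is legitimate.

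\textbf{Main obstacle.} The only delicate point is this last justification at the endpoint $y=x+t$. Everything else is bookkeeping: adding the $\delta$-contribution from Step~1 and the principal value contribution from Step~2 gives both cases of the theorem.
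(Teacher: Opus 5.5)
Your proposal is correct and follows the paper's proof, which likewise obtains Theorem~\ref{mainB} by combining Proposition~\ref{delpri}, Proposition~\ref{cos} for the $\delta$-term, and the vanishing of $K_b$ for $|t|<\bigl||x|-|y|\bigr|$ that comes from holomorphy of $\Phi_b$ in $z$. Your extra discussion of the endpoint $y=x+t$ is harmless but not needed: the domain can be restricted in each truncated integral over $|y-(x+t)|>\eta$ before the principal value limit is taken, so the restriction carries over to the limit automatically.
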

		
		\vspace{24pt}
		\subsection{Evaluation of $K_{b}(x,y;t)$}\label{4.6}
		We set 
		
		\[\Theta_{\nu}(u):=  \begin{dcases}
			-\frac{\sin(\nu\pi)}{\pi}Q_{\nu}\left(-u\right) &   \hspace{14pt} \text{if }\hspace{3pt}-\infty < u < -1, \\
			\frac{1}{2}P_{\nu}\left(u\right) &  \hspace{14pt} \text{if }\hspace{3pt}-1 < u < 1, \\
			0  &   \hspace{14pt} \text{if }\hspace{3pt}1 < u <\infty. \\
		\end{dcases}\]
		
		\vspace{12pt}
		We now compute the kernel $K_{b}(x,y;t)$, defined just before Theorem~\ref{mainB}, explicitly, thereby proving Main Theorem~\hyperref[mtC]{C}.
		\begin{thm}[Explicit formula for $K_{b}(x,y;t)$]\label{eva}
			\[ K_{b}(x,y;t) = b\,\mathrm{sgn}(t) \left\{-\Theta_{b-1}\left(\frac{x^{2}+y^{2}-t^{2}}{2|x||y|}  \right)+\mathrm{sgn}(xy)\Theta_{b}\left(\frac{x^{2}+y^{2}-t^{2}}{2|x||y|}  \right) \right\}. \]
			
			That is,
			\begin{align*}
				\MoveEqLeft K_{b}(x,y;t)  =b\,\mathrm{sgn}(t) \\
				\MoveEqLeft\hspace{6pt}\times\begin{dcases}
					0 & \text{if }\hspace{3pt} |t| < ||x|-|y||, \\
					\frac{1}{2}\biggl\{- P_{b-1}\left(\frac{x^{2}+y^{2}-t^{2}}{2|x||y|}  \right) +\mathrm{sgn}(xy)P_{b}\left(\frac{x^{2}+y^{2}-t^{2}}{2|x||y|}  \right) \biggr\} & \text{if }\hspace{3pt} ||x|-|y|| <|t| < |x|+|y|, \\
					-\frac{\sin(b\pi)}{\pi}\left\{Q_{b-1}\left(-\frac{x^{2}+y^{2}-t^{2}}{2|x||y|}  \right)+\mathrm{sgn}(xy)Q_{b}\left(-\frac{x^{2}+y^{2}-t^{2}}{2|x||y|}  \right) \right\} & \text{if }\hspace{3pt} |x|+|y| <|t|.
				\end{dcases} 
			\end{align*}
		\end{thm}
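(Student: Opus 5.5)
We start from the identity
\[
|x|^b|y|^b\Phi_b(x,y;z)=b\Bigl\{Q_{b-1}(W(z))-\mathrm{sgn}(xy)\,Q_b(W(z))\Bigr\},\qquad W(z):=\frac{x^2+y^2-z^2}{2|x||y|}.
\]
This reduces the computation of $K_b$ to the jump of $Q_\nu(W)$ across the real $t$-axis. For $z=t\pm i\varepsilon$ we have $W(t\pm i\varepsilon)=u\mp i\,\varepsilon t/(|x||y|)+O(\varepsilon^2)$ with $u:=W(t)$. Hence $z\to t\pm i0$ corresponds to $W\to u\mp i0\,\mathrm{sgn}(t)$, and this is the source of the factor $\mathrm{sgn}(t)$. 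Since $Q_\nu$ is single-valued and holomorphic on $\mathbb{C}\setminus(-\infty,1]$, it suffices to prove the one-variable jump formula
\[
\frac{-1}{2\pi i}\Bigl(Q_\nu(u-i0)-Q_\nu(u+i0)\Bigr)=\Theta_\nu(u)\qquad (u\neq\pm1),
\]
or equivalently $Q_\nu(u-i0)-Q_\nu(u+i0)=-2\pi i\,\Theta_\nu(u)$.

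Granting the jump formula, we substitute it with $\nu=b-1$ and $\nu=b$. This gives
\[
K_b=b\,\mathrm{sgn}(t)\bigl\{-\Theta_{b-1}(u)+\mathrm{sgn}(xy)\Theta_b(u)\bigr\}.
\]
The case distinctions then follow from elementary inequalities. The condition $u>1$ is equivalent to $|t|<\bigl||x|-|y|\bigr|$, which matches the vanishing already observed before Theorem~\ref{mainB}. The condition $|u|<1$ is equivalent to $\bigl||x|-|y|\bigr|<|t|<|x|+|y|$, and $u<-1$ is equivalent to $|t|>|x|+|y|$. In the last region we rewrite $\sin((b-1)\pi)=-\sin(b\pi)$, which produces the sign pattern $Q_{b-1}+\mathrm{sgn}(xy)Q_b$ displayed in the theorem.

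The jump formula is proved in three steps.

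\textbf{Region $u>1$.} Here $Q_\nu$ is analytic, so the jump is zero.

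\textbf{Region $-1<u<1$.} We use the classical relation between Ferrers and Legendre functions,
\[
Q_\nu(u\pm i0)=\mathsf{Q}_\nu(u)\mp\tfrac{i\pi}{2}P_\nu(u).
\]
The jump is then $i\pi P_\nu(u)$, which gives $\Theta_\nu=\tfrac12P_\nu$. To stay within the paper's definitions, I would derive this relation from the hypergeometric definition of $Q_\nu$ via the logarithmic connection formula of Appendix~\ref{App} (the case $\gamma=\alpha+\beta$). That formula expresses $Q_\nu(w)$ near $w=1$ as
\[
-\tfrac12\log(w-1)\,P_\nu(w)+\bigl(\text{holomorphic near }w=1\bigr).
\]
Using $P_\nu(w)={}_2F_1(-\nu,\nu+1;1;\tfrac{1-w}{2})$, the only multivalued piece is the logarithm. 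The jump of $\log(w-1)$ across $(-1,1)$ is $\pm2\pi i$, so the jump of $Q_\nu$ there is $\pm i\pi P_\nu$. The holomorphic remainder has no jump on $(-1,1)$ because it extends analytically to a neighbourhood of $(-1,1]$; checking this is part of the obstacle identified below.

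\textbf{Region $u<-1$.} We use the reflection formula
\[
Q_\nu(-w)=-e^{\mp i\pi\nu}Q_\nu(w)\qquad(\mathrm{Im}\,w\gtrless0).
\]
This follows directly from the defining series, which is $w^{-\nu-1}$ times a power series in $w^{-2}$: the branch of $w^{-\nu-1}$ changes by $e^{\pm i\pi(\nu+1)}$. Taking $u=-s$ with $s>1$, the two boundary values are $-e^{\pm i\pi\nu}Q_\nu(s)$, and their difference is $\pm2i\sin(\nu\pi)Q_\nu(s)$. This yields $\Theta_\nu(u)=-\frac{\sin\nu\pi}{\pi}Q_\nu(-u)$ once the signs are fixed.

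\textbf{Main obstacle.} The hardest part is the consistent bookkeeping of branches and signs in the last two regions. The orientation $z=t+i\varepsilon\mapsto W$ in the lower or upper half-plane, with its $\mathrm{sgn}(t)$ dependence, must match the sign conventions in the Ferrers relation and in the reflection formula. I would fix everything by checking against the case $b=0$ (where $K_0=0$), and against $\nu$ a nonnegative integer, where $Q_n(w)=\tfrac12P_n(w)\log\frac{w+1}{w-1}-W_{n-1}(w)$ makes every jump explicit. As a further consistency check, the results for the two regions should agree with the requirement that the total jump integrates correctly, via Proposition~\ref{cos}.
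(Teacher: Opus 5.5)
Your route is the paper's. You reduce $K_b$ to the boundary jump of $Q_\nu$ at $u=\frac{x^2+y^2-t^2}{2|x||y|}$, note that $W(t\pm i0)=u\mp i0\,\mathrm{sgn}(t)$, and evaluate the jump on $u>1$, $-1<u<1$ and $u<-1$. The orientation observation, the region $u>1$, and the final translation into the three cases are fine. But this theorem consists of sign and branch bookkeeping, and yours contains errors that only cancel by accident.

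\textbf{The jump formula and the substitution step.} The identity you say suffices, $\frac{-1}{2\pi i}\bigl(Q_\nu(u-i0)-Q_\nu(u+i0)\bigr)=\Theta_\nu(u)$, is false: its left side equals $-\Theta_\nu(u)$. Already for $\nu=0$, $Q_0(w)=\frac12\log\frac{w+1}{w-1}$ gives $Q_0(u\pm i0)=\frac12\log\frac{1+u}{1-u}\mp\frac{i\pi}{2}$ on $(-1,1)$, so your left side is $-\frac12$. The correct statement, which is the paper's lemma, is $\frac{-1}{2\pi i}\bigl(Q_\nu(u+i0)-Q_\nu(u-i0)\bigr)=\Theta_\nu(u)$.

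Moreover, your substitution step does not follow from your own formula. Since $W(t\pm i0)=u\mp i0\,\mathrm{sgn}(t)$, the definition of $K_b$ gives
\[
K_b=b\,\mathrm{sgn}(t)\,\frac{-1}{2\pi i}\Bigl\{\bigl[Q_{b-1}(u-i0)-Q_{b-1}(u+i0)\bigr]-\mathrm{sgn}(xy)\bigl[Q_b(u-i0)-Q_b(u+i0)\bigr]\Bigr\}.
\]
So your jump formula would produce $b\,\mathrm{sgn}(t)\{\Theta_{b-1}(u)-\mathrm{sgn}(xy)\Theta_b(u)\}$, the negative of the theorem. The same slip appears on $(-1,1)$. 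The Ferrers relation you quote gives $Q_\nu(u-i0)-Q_\nu(u+i0)=i\pi P_\nu(u)$, which under your jump formula means $\Theta_\nu=-\frac12P_\nu$, not $\frac12P_\nu$.

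\textbf{The reflection formula.} Your reflection formula has the phases conjugated. With the principal branch of $w^{-\nu-1}$ in the defining series, $\mathrm{Im}\,w>0$ gives $\arg(-w)=\arg w-\pi$, hence $(-w)^{-\nu-1}=e^{i\pi(\nu+1)}w^{-\nu-1}$. So $Q_\nu(-w)=-e^{\pm i\pi\nu}Q_\nu(w)$ for $\mathrm{Im}\,w>0$ and $\mathrm{Im}\,w<0$ respectively, as in the paper. For $u<-1$ this yields $Q_\nu(u\pm i0)=-e^{\mp i\pi\nu}Q_\nu(-u)$. Hence $Q_\nu(u+i0)-Q_\nu(u-i0)=2i\sin(\nu\pi)\,Q_\nu(-u)$, and $\Theta_\nu(u)=-\frac{\sin\nu\pi}{\pi}Q_\nu(-u)$ comes out with the correct orientation.

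Your wrong reflection formula combined with your wrong orientation happens to reproduce the right $\Theta_\nu$ on $u<-1$. That is why ``once the signs are fixed'' looked harmless. But no single orientation makes both of your computations, on $(-1,1)$ and on $u<-1$, correct, so both errors must be repaired.

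The checks you propose would not catch this. The case $b=0$ is vacuous because of the prefactor $b$. Integer $\nu$ detects the orientation error on $(-1,1)$ but not the reflection phase, since then $\sin(\nu\pi)=0$ and $e^{i\pi\nu}=e^{-i\pi\nu}$. The phase has to be computed directly from the branch of $w^{-\nu-1}$, as above.

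\textbf{The interval $(-1,1)$.} The point you leave open here is real. The Appendix expansion is in powers of $1-w^{-2}$, so it describes $Q_\nu$ only near $w=1$ (on the real segment, for $1/\sqrt2<u<1$). Identifying the coefficient of the logarithm with $P_\nu$ also needs a quadratic transformation. The paper avoids both issues. The jump $Q_\nu(u+i0)-Q_\nu(u-i0)$ is itself a solution of Legendre's equation on $(-1,1)$, with limit $-\pi i$ as $u\to1^-$ (read off from the logarithmic term). Since $u=1$ is a regular singular point with double exponent $0$, the solutions bounded there are the multiples of $P_\nu$. Hence the jump equals $-\pi i\,P_\nu$ on all of $(-1,1)$.
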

		
		\begin{rmk}
			We set $u=\frac{x^{2}+y^{2}-t^{2}}{2|x||y|}  $. Then, $ u\le  \frac{x^2+y^2}{2|x||y|}$ and
			\begin{align*}
				-\infty < u < -1 &\hspace{14pt}\Leftrightarrow\hspace{14pt} |x|+|y| <|t| <\infty, \\
				-1 < u < 1 &\hspace{14pt}\Leftrightarrow\hspace{14pt} \bigl||x|-|y|\bigr| <|t|<|x|+|y|, \\
				1 < u < \frac{x^2+y^2}{2|x||y|} &\hspace{14pt}\Leftrightarrow\hspace{14pt} 0 <|t|<\bigl||x|-|y|\bigr|.
			\end{align*}
		\end{rmk}
		\vspace{6pt}
		
		\begin{proof}[Proof of Theorem~\ref{eva}]
		By the definition of $\Phi_{b}(x,y;z)$ given at the beginning of Subsection~\ref{4.3},
		\begin{align*}
			\MoveEqLeft K_{b}(x,y;t) = \frac{b}{2\pi i} \Biggl\{\Biggl(Q_{b-1}\left(\frac{x^2+y^2-(t+i0)^2}{2|x||y|}\right)-Q_{b-1}\left(\frac{x^2+y^2-(t-i0)^2}{2|x||y|}\right)\Biggr)\Biggr. \\ 
			&\Biggl.-\mathrm{sgn}(xy)\Biggl(Q_{b}\left(\frac{x^2+y^2-(t+i0)^2}{2|x||y|}\right)-Q_{b}\left(\frac{x^2+y^2-(t-i0)^2}{2|x||y|}\right)\Biggr)\Biggr\}
		\end{align*}
		\vspace{12pt}
		Hence, it suffices to show the following lemma.
		
		\begin{lmm}
			$$
			\frac{-1}{2\pi i} \left(Q_{\nu}\left(u+i0\right)-Q_{\nu}\left(u-i0\right)\right) =\Theta_{\nu}(u) =\begin{dcases}
				-\frac{\sin(\nu\pi)}{\pi}Q_{\nu}\left(-u\right) &   \hspace{14pt} \text{if }\hspace{3pt}-\infty < u < -1, \\
				\frac{1}{2}P_{\nu}\left(u\right) &  \hspace{14pt} \text{if }\hspace{3pt}-1 < u < 1, \\
				0  &   \hspace{14pt}\text{if }\hspace{3pt} 1 < u <\infty. \\
			\end{dcases}$$
		\end{lmm}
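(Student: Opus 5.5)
The jump of $Q_\nu$ across the real axis can be computed from its global structure. $Q_\nu$, defined for $w>1$ by the hypergeometric series in $w^{-2}$ and continued analytically, is single-valued and holomorphic on $\mathbb C\setminus(-\infty,1]$. I will treat the three ranges of $u$ separately.

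\emph{Case $1<u<\infty$.} The defining series converges near $u$ (since $|u|^{-2}<1$), so $Q_\nu$ is holomorphic in a neighbourhood of $u$. Hence $Q_\nu(u+i0)=Q_\nu(u-i0)$ and $\Theta_\nu(u)=0$.

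\emph{Case $-1<u<1$.} I will use the classical relation between $Q_\nu$ on the cut and the Ferrers/Legendre function $P_\nu$ (e.g.\ \cite[8.7--8.8]{MR3307944}):
\[
Q_\nu(u\pm i0)=\mathsf Q_\nu(u)\mp \frac{\pi i}{2}P_\nu(u),
\]
where $\mathsf Q_\nu$ is the real Ferrers function. The jump is then $Q_\nu(u+i0)-Q_\nu(u-i0)=-\pi i P_\nu(u)$, and multiplying by $\frac{-1}{2\pi i}$ gives $\frac12P_\nu(u)$.

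To make this self-contained rather than cite it, I would use the alternative representation
\[
Q_\nu(w)=\tfrac12\int_{-1}^{1}\frac{P_\nu(s)}{w-s}\,ds ,
\]
valid for $w\notin[-1,1]$. This is Neumann's formula, which holds for integer $\nu$ but needs modification for non-integer $\nu$; that modification is exactly what produces the third case. For $-1<u<1$ I would therefore rely on the hypergeometric connection formula. Write $Q_\nu$ near $w=1$ using the logarithmic formula from the Appendix (with $\gamma=\alpha+\beta$ after a quadratic transformation), or directly via
\[
Q_\nu(w)=\tfrac12 P_\nu(w)\log\frac{w+1}{w-1}-W_{\nu-1}(w),
\]
where $W_{\nu-1}$ is holomorphic near $(-1,1)$ for the relevant branch. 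Crossing the interval $(-1,1)$, $\log(w-1)$ jumps by $2\pi i$, so $\log\frac{w+1}{w-1}$ changes by $-2\pi i$ from above to below. This yields the jump $-\pi i P_\nu(u)$. The function $P_\nu(w)={}_2F_1(-\nu,\nu+1;1;\frac{1-w}2)$ is itself holomorphic on $\mathbb C\setminus(-\infty,-1]$, so it causes no further jump on $(-1,1)$.

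\emph{Case $u<-1$.} Here I will use the reflection $w\mapsto -w$. From the series definition, $w^{-\nu-1}$ is the only multivalued factor, since the ${}_2F_1$ factor depends on $w^{-2}$ and is single-valued for $|w|>1$. Consequently, for $\operatorname{Im}w\gtrless0$ and $|w|>1$,
\[
Q_\nu(-w)=e^{\mp i\pi(\nu+1)}Q_\nu(w).
\]
Set $w=-u\pm i0$, so that $-u>1$. Then
\[
Q_\nu(u+i0)-Q_\nu(u-i0)=\bigl(e^{i\pi(\nu+1)}-e^{-i\pi(\nu+1)}\bigr)Q_\nu(-u)=-2i\sin(\nu\pi)\,Q_\nu(-u).
\]
Multiplying by $\frac{-1}{2\pi i}$ gives $\frac{\sin(\nu\pi)}{\pi}Q_\nu(-u)$ up to sign. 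The sign must come out as $-\frac{\sin\nu\pi}{\pi}Q_\nu(-u)$, and I expect fixing it to be the main bookkeeping obstacle: which half-plane corresponds to $e^{+i\pi}$ when $u+i0$ is written as $-(-u-i0)$. Since $u+i0=-((-u)-i0)$, the point $w=-u-i0$ lies in the lower half-plane, so the factor is $e^{+i\pi(\nu+1)}$ under the convention $(-1)=e^{-i\pi}\cdot$ from below. I would recheck this carefully with $\nu=0$, where $Q_0(w)=\frac12\log\frac{w+1}{w-1}$ has no jump for $u<-1$, consistent with $\sin 0=0$. As a further sign check I would use $\nu=\tfrac12$.

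The delicate step overall is consistency of branches. The same branch of $Q_\nu$ must be used both in the $(-1,1)$ analysis and in the reflection near $-\infty$. I would fix it by continuity from $w>1$ through the upper and lower half-planes separately.
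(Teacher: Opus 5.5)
Your three cases follow the paper's route: holomorphy on $(1,\infty)$, reflection through the factor $w^{-\nu-1}$ for $u<-1$, and the logarithmic singularity at $w=1$ for $-1<u<1$. The first case is fine.

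\textbf{The case $u<-1$ has a sign error.} As written, it yields $+\frac{\sin(\nu\pi)}{\pi}Q_\nu(-u)$, and your proposed repair repeats the same mistake. Your reflection identity is reversed.

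On $\{|w|>1\}\setminus(-\infty,-1]$, $Q_\nu(w)$ is a constant times $w^{-\nu-1}$ times a function of $w^{-2}$, using the \emph{principal} branch of $w^{-\nu-1}$. For $\mathrm{Im}\,w>0$ one has $\arg(-w)=\arg w-\pi$, so $(-w)^{-\nu-1}=e^{+i\pi(\nu+1)}w^{-\nu-1}$. Hence $Q_\nu(-w)=e^{\pm i\pi(\nu+1)}Q_\nu(w)=-e^{\pm\nu\pi i}Q_\nu(w)$ for $\mathrm{Im}\,w\gtrless0$, which is the paper's identity, not $e^{\mp i\pi(\nu+1)}$.

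Your ``convention $(-1)=e^{-i\pi}$ from below'' is exactly where it breaks. For $\mathrm{Im}\,w<0$ you must write $-w=e^{+i\pi}w$ to stay in the principal range, and the exponent $-(\nu+1)$ then gives $e^{-i\pi(\nu+1)}$.

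The cleanest bookkeeping skips the reflection. For $u<-1$, $\arg(u\pm i0)=\pm\pi$, while $w^{-2}\to u^{-2}\in(0,1)$ continuously. So $Q_\nu(u\pm i0)=e^{\mp i\pi(\nu+1)}Q_\nu(-u)$, and the jump is $\bigl(e^{-i\pi(\nu+1)}-e^{i\pi(\nu+1)}\bigr)Q_\nu(-u)=2i\sin(\nu\pi)\,Q_\nu(-u)$. Multiplying by $\frac{-1}{2\pi i}$ gives $-\frac{\sin(\nu\pi)}{\pi}Q_\nu(-u)$, as claimed. Your proposed check at $\nu=0$ cannot detect this sign, since $\sin 0=0$.

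\textbf{The case $-1<u<1$ rests on an unproved assertion.} You assert that $\frac12P_\nu(w)\log\frac{w+1}{w-1}-Q_\nu(w)$ extends holomorphically across all of $(-1,1)$ for non-integer $\nu$. For integer $\nu$ this is a polynomial, but in general it needs an argument, and ``for the relevant branch'' does not supply one.

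No quadratic transformation is needed to use the Appendix formula. The parameters $\alpha=\frac{\nu+1}{2}$, $\beta=\frac{\nu+2}{2}$, $\gamma=\nu+\frac32$ already satisfy $\gamma=\alpha+\beta$. However, that expansion is in powers of $1-w^{-2}$ and converges only for $|1-w^{-2}|<1$, which on the real segment means $\frac{1}{\sqrt2}<|u|<1$. So by itself it gives the jump only near $u=1$.

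The paper closes this with the ODE:
\begin{itemize}
\item $Q_\nu(u+i0)$ and $Q_\nu(u-i0)$ both solve the Legendre equation on $(-1,1)$, so their difference does too.
\item That difference tends to $-\pi i$ as $u\to1^-$.
\item The second Frobenius solution at the regular singular point $w=1$ (exponents $0,0$) is logarithmic there, so the difference must be $-\pi i\,P_\nu(u)$, using $P_\nu(1)=1$.
\end{itemize}
Alternatively, you can justify your decomposition directly by Frobenius at $w=1$: the holomorphic parts converge in $|w-1|<2$, since the nearest other singular point is $-1$, and this disc contains $(-1,1)$. With either addition, or by simply citing the classical formula for $Q_\nu(u\pm i0)$ as you first suggest (the paper also refers to the tables here), this case is complete.
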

		
		\vspace{6pt}
		\begin{proof} 
				\begin{enumerate}
					\item (When $1<u<\infty$) \\
					Since $Q_{\nu}(w)$ is holomorphic on $\mathbb{C}\setminus(-\infty,1]$,
					the boundary values from above and below coincide. This proves the claim.
					\item (When $-\infty<u<-1$)\\ By the identity
					\begin{align*}
						Q_{\nu}(-w)=-e^{-\nu\pi i}Q_{\nu}(w) \hspace{24pt}\text{if }\hspace{8pt}\mathrm{Im}(w)\,<\,0 \,\\
						Q_{\nu}(-w)=-e^{\nu\pi i}Q_{\nu}(w) \hspace{24pt}\text{if } \hspace{8pt}\mathrm{Im}(w)\,>\,0 \,
					\end{align*}
					the claim follows. The above identity follows from the definition of $Q_{\nu}(w)$; see also \cite[8.736, items 5 and 6]{MR3307944}, for a reference.
					\item (When $-1<u<1$) \\
					Recalling that
					
					\[Q_{\nu}(w)=\frac{1}{2}\frac{\Gamma\left(\frac{\nu+1}{2}\right)\Gamma\left(\frac{\nu+2}{2}\right)}{\Gamma(\nu+3/2)}w^{-(\nu+1)} {}_{2}F_{1}\left(\begin{matrix}
						\frac{\nu+1}{2}, \frac{\nu+2}{2} \\ \nu+\frac{3}{2} \\
					\end{matrix};\frac{1}{w^2}\right), \]
					
					we use the following connection formula for the Gauss hypergeometric function in the case $\gamma=\alpha+\beta$; a proof is given in Appendix~\ref{App}:
					\begin{align*}
						\MoveEqLeft {}_{2}F_{1}\left(\begin{matrix}
							\alpha, \beta \\ \gamma \\
						\end{matrix}; w\right) = \frac{\Gamma(\gamma)}{\Gamma(\alpha)\Gamma(\beta)}\left\{-\log\left(1-w\right) {}_{2}F_{1}\left(\begin{matrix}
							\alpha, \beta \\ 1 \\
						\end{matrix};1-w\right) \right.&\\
						&\left.- \sum_{m=0}^{\infty} \frac{(\alpha)_{m}(\beta)_{m}}{m!} \bigl(\psi(\alpha+m)+\psi(\beta+m)-2\psi(1+m) \bigr)\frac{(1-w)^{m}}{m!}\right\}.
					\end{align*}
					Here $\psi(w):=\frac{\Gamma'(w)}{\Gamma(w)}$ is the digamma function.
				
					\vspace{6pt}
					Then, 
					\[\lim_{u\to 1} \frac{Q_{\nu}(u+i0)-Q_{\nu}(u-i0)}{2} = -\frac{\pi i}{2}\]
					
					Since singular points of the Legendre differential equation $(1-x^{2})y''-2x y'+\nu(\nu+1)y=0$ are regular and $P_{\nu}(1)=1$, 
					
					$$ \frac{Q_{\nu}(u+ i0)- Q_{\nu}(u- i0)}{2} =- \frac{\pi i}{2} P_{\nu}(u)\hspace{36pt}(-1 < u < 1 ).$$
					
					This proves the claim.
					
					(See also \cite[8.705 and 8.732, item 5]{MR3307944}, for a reference.)
					
				\end{enumerate}
				
			\end{proof}
		\end{proof}
		
		\section{Appendix}\label{App}
		\subsection{A Connection Formula for ${}_2F_{1}(\alpha,\beta;\gamma;z)$ in the Case $\gamma=\alpha+\beta$}

		\begin{lmm}[Connection Formula for ${}_2F_{1}(\alpha,\beta;\gamma;z)$ when $\gamma-\alpha-\beta=0$]
			When $\gamma=\alpha+\beta$, 
			\begin{align*}
				\MoveEqLeft {}_{2}F_{1}\left(\begin{matrix}
					\alpha, \beta \\ \gamma \\
				\end{matrix};w\right) = \frac{\Gamma(\gamma)}{\Gamma(\alpha)\Gamma(\beta)}\left\{-\log\left(1-w\right) {}_{2}F_{1}\left(\begin{matrix}
					\alpha, \beta \\ 1 \\
				\end{matrix};1-w\right) \right.&\\
				&\left.- \sum_{m=0}^{\infty} \frac{(\alpha)_{m}(\beta)_{m}}{m!} \bigl(\psi(\alpha+m)+\psi(\beta+m)-2\psi(1+m) \bigr)\frac{(1-w)^{m}}{m!}\right\}.
			\end{align*}
			Here $\psi(w):=\frac{\Gamma'(w)}{\Gamma(w)}$ is the digamma function.
		\end{lmm}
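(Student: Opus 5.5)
I will prove the connection formula by the standard Barnes-integral (or limiting) argument: start from the generic connection formula for $\gamma-\alpha-\beta\notin\mathbb{Z}$ and let $\gamma\to\alpha+\beta$. Concretely, for $\gamma=\alpha+\beta+\epsilon$ with $\epsilon$ small and nonzero, and $|1-w|<1$, $w\notin(-\infty,0]\cup[1,\infty)$, we have
\[
{}_2F_1\!\left(\begin{matrix}\alpha,\beta\\ \gamma\end{matrix};w\right)
=\frac{\Gamma(\gamma)\Gamma(\epsilon)}{\Gamma(\gamma-\alpha)\Gamma(\gamma-\beta)}\,{}_2F_1\!\left(\begin{matrix}\alpha,\beta\\ 1-\epsilon\end{matrix};1-w\right)
+\frac{\Gamma(\gamma)\Gamma(-\epsilon)}{\Gamma(\alpha)\Gamma(\beta)}(1-w)^{\epsilon}\,{}_2F_1\!\left(\begin{matrix}\gamma-\alpha,\gamma-\beta\\ 1+\epsilon\end{matrix};1-w\right).
\]
I take this as known (Gauss/Kummer). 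Both terms have a simple pole at $\epsilon=0$, and these poles cancel because the left side is analytic in $\gamma$. The finite limit is therefore obtained by expanding both terms to order $\epsilon^0$.

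\textbf{Step 1: expand termwise.} Write each ${}_2F_1$ as $\sum_m c_m(\epsilon)(1-w)^m$. The $m$-th coefficient of the first term is
\[
\frac{\Gamma(\gamma)\Gamma(\epsilon)}{\Gamma(\beta+\epsilon)\Gamma(\alpha+\epsilon)}\cdot\frac{(\alpha)_m(\beta)_m}{(1-\epsilon)_m\, m!}.
\]
That of the second term is
\[
\frac{\Gamma(\gamma)\Gamma(-\epsilon)}{\Gamma(\alpha)\Gamma(\beta)}(1-w)^\epsilon\frac{(\beta+\epsilon)_m(\alpha+\epsilon)_m}{(1+\epsilon)_m\, m!}.
\]
Using $(a)_m=\Gamma(a+m)/\Gamma(a)$, I rewrite them as
\[
\Gamma(\epsilon)\,\frac{\Gamma(\gamma)\Gamma(\alpha+m)\Gamma(\beta+m)}{\Gamma(\alpha)\Gamma(\beta)\Gamma(\alpha+\epsilon)\Gamma(\beta+\epsilon)}\cdot\frac{\Gamma(1-\epsilon)}{\Gamma(1-\epsilon+m)\,m!}
\]
and
\[
\Gamma(-\epsilon)(1-w)^\epsilon\,\frac{\Gamma(\gamma)\Gamma(\alpha+\epsilon+m)\Gamma(\beta+\epsilon+m)}{\Gamma(\alpha)\Gamma(\beta)\Gamma(\alpha+\epsilon)\Gamma(\beta+\epsilon)}\cdot\frac{\Gamma(1+\epsilon)}{\Gamma(1+\epsilon+m)\,m!}.
\]

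\textbf{Step 2: take the limit.} Use $\Gamma(\pm\epsilon)=\pm\epsilon^{-1}-\gamma_E+O(\epsilon)$, $\Gamma(1-\epsilon)\Gamma(\epsilon)=\pi/\sin\pi\epsilon$, and the log-derivatives $\frac{d}{d\epsilon}\log\Gamma(a\pm\epsilon)=\pm\psi(a)$. The $1/\epsilon$ parts cancel. The constant term then collects:
\begin{itemize}
\item $-\log(1-w)$ from differentiating $(1-w)^\epsilon$;
\item $-\psi(\alpha+m)-\psi(\beta+m)$ from the numerator Gammas of the second term;
\item $+\psi(1+m)$ twice, from $\Gamma(1\mp\epsilon+m)$.
\end{itemize}
The $\psi(\alpha),\psi(\beta)$ contributions from the common prefactor $1/\Gamma(\alpha+\epsilon)\Gamma(\beta+\epsilon)$ cancel between the two terms, since both terms carry it with opposite-sign poles. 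Likewise, the $\gamma_E$ and $\psi(1)$ contributions from $\Gamma(\pm\epsilon)\Gamma(1\pm\epsilon)$ cancel, because $\Gamma(\epsilon)\Gamma(1-\epsilon)$ and $\Gamma(-\epsilon)\Gamma(1+\epsilon)$ are odd functions of $\epsilon$ to this order. At $\epsilon=0$, $\Gamma(\gamma)\to\Gamma(\alpha+\beta)$ with $\gamma=\alpha+\beta$. The result is
\[
\frac{\Gamma(\gamma)}{\Gamma(\alpha)\Gamma(\beta)}\,\frac{(\alpha)_m(\beta)_m}{m!\,m!}\Bigl(-\log(1-w)-\psi(\alpha+m)-\psi(\beta+m)+2\psi(1+m)\Bigr),
\]
which is exactly the claimed coefficient. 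Summing over $m$ gives the formula for $|1-w|<1$. It then extends to the cut plane by analytic continuation.

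\textbf{Main obstacle.} The delicate point is bookkeeping the $O(\epsilon)$ terms so that the cancellations of the $\psi(\alpha)$, $\psi(\beta)$ and Euler-constant contributions are verified correctly. I would write each coefficient as $\epsilon^{-1}A_m(\epsilon)$ with $A_m$ analytic and compute $A_m'(0)$ via log-derivatives; the cancellation of the poles is equivalent to the two $A_m(0)$ summing to zero. Uniform convergence in $m$, which is needed to interchange the limit with the sum, follows from locally uniform convergence of the series for $|1-w|<1$. Alternatively, one could verify directly that the right side satisfies the hypergeometric equation and matches at $w=0$, but the limiting argument is cleaner.
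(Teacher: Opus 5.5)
Your proposal is correct and follows the paper's route. The paper also starts from the generic connection formula (DLMF 15.8.4 in regularized form, which is your formula divided by $\Gamma(\gamma)$, with $\Gamma(\pm\epsilon)\Gamma(1\mp\epsilon)$ combined into $\pm\pi/\sin(\pi\epsilon)$) and lets $\delta=\gamma-\alpha-\beta\to 0$; your term-by-term $O(\epsilon^{0})$ bookkeeping is correct and fills in the computation the paper summarizes as ``taking the limit.''
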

	
		\begin{proof}
			We set 
			\[{}_{2}\widetilde{F}_{1}\left(\begin{matrix}
				\alpha, \beta \\ \gamma \\
			\end{matrix};w\right):=\frac{1}{\Gamma(\gamma)} {}_{2}F_{1}\left(\begin{matrix}
			\alpha, \beta \\ \gamma \\
			\end{matrix};w\right).\]
			
			Then, the following connection formula holds:
			\[ {}_{2}\widetilde{F}_{1}\left(\begin{matrix}
				\alpha, \beta \\ \gamma \\
			\end{matrix};w\right) =\frac{\pi}{\sin(\pi\delta)}\left(\frac{{}_{2}\widetilde{F}_{1}\left(\begin{matrix}
				\alpha, \beta \\ -\delta+1 \\
			\end{matrix};1-w\right)}{\Gamma(\delta+\alpha)\Gamma(\delta+\beta)}-(1-w)^{\delta} \frac{{}_{2}\widetilde{F}_{1}\left(\begin{matrix}
			\delta+\alpha, \delta+\beta \\ \delta+1 \\
		\end{matrix};1-w\right)}{\Gamma(\alpha)\Gamma(\beta)}\right),\]
	
		where $\delta := \gamma-\alpha-\beta$. (See \cite{NIST:DLMF}, 15.8.4, for a reference.) The above formula holds for $0<w<1$, and extends by analytic continuation. Taking the limit as $\delta\to0$, we obtain the claim.
		\end{proof}
		
		\vspace{12pt}
			\section*{Acknowledgements}
			The author would like to express his gratitude to his supervisor, Professor Toshiyuki Kobayashi, for his continuous support and encouragement. 
			This research was supported partially by JSPS KAKENHI Grant Number JP24KJ0937 and Forefront Physics and Mathematics Program to Drive Transformation (FoPM), a World-leading Innovative Graduate Study (WINGS) Program, The University of Tokyo.

	\bibliography{paper2}	

\begin{thebibliography}{BK{\O}12}

\bibitem[BK{\O}12]{MR2956043}
Salem Ben~Sa{\"i}d, Toshiyuki Kobayashi, and Bent {\O}rsted.
\newblock Laguerre semigroup and {D}unkl operators.
\newblock {\em Compos. Math.}, 148(4):1265--1336, 2012.

\bibitem[{\relax DLMF}]{NIST:DLMF}
{\it NIST Digital Library of Mathematical Functions}.
\newblock \url{https://dlmf.nist.gov/}, Release 1.2.5 of 2025-12-15.
\newblock F.~W.~J. Olver, A.~B. {Olde Daalhuis}, D.~W. Lozier, B.~I. Schneider,
  R.~F. Boisvert, C.~W. Clark, B.~R. Miller, B.~V. Saunders, H.~S. Cohl, and
  M.~A. McClain, eds.

\bibitem[Dun89]{MR951883}
Charles~F. Dunkl.
\newblock Differential-difference operators associated to reflection groups.
\newblock {\em Trans. Amer. Math. Soc.}, 311(1):167--183, 1989.

\bibitem[Dun92]{MR1199124}
Charles~F. Dunkl.
\newblock Hankel transforms associated to finite reflection groups.
\newblock In {\em Hypergeometric functions on domains of positivity, {J}ack
  polynomials, and applications ({T}ampa, {FL}, 1991)}, volume 138 of {\em
  Contemp. Math.}, pages 123--138. Amer. Math. Soc., Providence, RI, 1992.

\bibitem[GR15]{MR3307944}
I.~S. Gradshteyn and I.~M. Ryzhik.
\newblock {\em Table of integrals, series, and products}.
\newblock Elsevier/Academic Press, Amsterdam, eighth edition, 2015.
\newblock Translated from the Russian, Translation edited and with a preface by
  Daniel Zwillinger and Victor Moll.

\bibitem[KM07]{MR2401813}
Toshiyuki Kobayashi and Gen Mano.
\newblock The inversion formula and holomorphic extension of the minimal
  representation of the conformal group.
\newblock In {\em Harmonic analysis, group representations, automorphic forms
  and invariant theory}, volume~12 of {\em Lect. Notes Ser. Inst. Math. Sci.
  Natl. Univ. Singap.}, pages 151--208. World Sci. Publ., Hackensack, NJ, 2007.

\bibitem[Wat44]{MR10746}
G.~N. Watson.
\newblock {\em A {T}reatise on the {T}heory of {B}essel {F}unctions}.
\newblock Cambridge University Press, Cambridge; The Macmillan Company, New
  York, 1944.

\end{thebibliography}
	\bibliographystyle{alpha}

\end{document}